\documentclass[12pt,a4paper,reqno]{amsart}
\usepackage[latin1]{inputenc}
\usepackage{amsmath}
\usepackage{amsfonts}
\usepackage{amssymb}
\usepackage{amsmath}
\usepackage{amsthm}
\usepackage{graphicx}

\usepackage[margin=3cm]{geometry}
\linespread{1.05}

\newtheorem{theorem}{Theorem}
\newtheorem{lemma}{Lemma}
\newtheorem{cor}{Corollary}
\newtheorem{conj}{Conjecture}

\newtheorem{remark}{Remark}
\newtheorem{defn}{Definition}
\newtheorem{example}{Example}

\author{Ben Lawrence}
\address{Department of Mathematics, University of Auckland}
\email{ben.lawrence@auckland.ac.nz}
\thanks{Supported by the Marsden Fund Council of the Royal Society of New Zealand.}

\title[Burnside graphs and the Kippenhahn Conjecture]{Burnside graphs, algebras generated by sets of matrices, and the Kippenhahn Conjecture}
\subjclass[2010]{Primary 05C50, 15A22, 16S50; Secondary 15A15, 15B57.}
\date{\today}
\keywords{Burnside graph, matrix subalgebra, linear pencil, double eigenvalues, Kippenhahn conjecture}
\begin{document}

\begin{abstract}
Given a set of matrices, it is often of interest to determine the algebra they generate. Here we exploit the concept of the Burnside graph of a set of matrices, and show how it may be used to deduce properties of the algebra they generate. We prove two conditions regarding a set of matrices generating the full algebra; the first necessary, the second sufficient. An application of these results is given in the form of a new family of counterexamples to the Kippenhahn conjecture, of order $8 \times 8$ and greater.
\end{abstract}

\maketitle

\section{Introduction}\label{intro}

One of our main goals is to determine whether or not a set of $n \times n$ matrices over a field $F$ generates the full matrix algebra $M_{n}(F)$. Various authors have looked at this problem. Here is a small sample of the vast literature on this question. Kostov \cite{min_gens} placed minimum bounds on the number of complex matrices required to generate a subalgebra of $M_{n}(\mathbb{C})$. For the case of two matrices, one of which has distinct eigenvalues, George and Ikramov gave a criterion for when they cannot generate the full algebra \cite{common_invariant}. Again using the assumption that one of the generating matrices has distinct eigenvalues, Laffey gave two separate criteria for generation of the full algebra \cite{laffey_struct,laffey_full_gen}. Aslaksen and Sletsj{\o}e in their 2009 paper \cite{generators} published some criteria for $ n = 2 \mbox{ or } 3$. We will go beyond the distinct eigenvalue requirement, and give criteria for the case of repeated eigenvalues.

In accordance with Burnside's theorem for matrix algebras, given for instance as Corollary 5.23 of Bre\v{s}ar \cite{burnside_proof}, a set of $n \times n$ complex matrices generates the full algebra $M_{n}(\mathbb{C})$ if and only if they have no invariant subspaces in common. In the appopriate basis, these invariant subspaces are immediately apparent. We will now define the Burnside graph of a set of matrices to help the invariant subspaces emerge. This definition is adapted from \cite{laffey_struct}. \begin{defn}[Burnside graph]\label{burnside-1}
Let $A = \left\lbrace A_{1},...,A_{k}\right\rbrace$ be a set of $n \times n$ matrices over a field $F$. The \textbf{Burnside graph} $B(A)$ of $A$ is a directed graph of $n$ nodes $\left\lbrace 1,...,n \right\rbrace$, with a directed edge existing from node $i$ to node $j$ if and only if there is some matrix $A_{m}$ with a non-zero entry at the $(i,j)$ position. Self-loops are not considered. A Burnside graph $B(A)$ is \textbf{strongly connected} if every pair of nodes in $B(A)$ is path-connected in both directions.

The Burnside graph $B(A)$ as defined above is formed by treating each $A_{i}$ as an adjacency matrix without regard to weighting, constructing all of the associated graphs, and merging them all.

\end{defn} 

See section $ \ref{obstacles}$  for an example of a Burnside graph.  When the graph has certain non-connectivity properties, it is guaranteed that the set of matrices does not generate the full algebra. However, the Burnside graph will change if the basis is changed. It is necessary in a sense to be in the correct basis in order to see the invariant subspaces. This brings us to our first theorem, which connects the algebra generated by a tuple of matrices $A$ with the strong connectedness of the Burnside graph $B(A)$. Since a tuple of $n \times n$ matrices over a field $F$ generates $M_{n}(F)$ if and only if they generate over the algebraic closure $\overline{F}$ of $F$ the full matrix algebra $M_{n}(\overline{F})$, we will focus mainly on algebraically closed fields of characteristic 0, and $\mathbb{C}$ in particular. We will occasionally also work over $\mathbb{R}$.

\begin{theorem}[Obstacle to full algebra]\label{not_gen_full_thm}
Let $A = \left\lbrace A_{1},...,A_{k}\right\rbrace$ be a set of $n \times n$ matrices over $\mathbb{C}$, and let $\mathcal{A}$ be the algebra generated by $A$. If $B(A)$ is not strongly connected, then $\mathcal{A} \neq M_{n}(\mathbb{C})$.
\end{theorem}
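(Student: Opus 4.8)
The plan is to read off from the failure of strong connectivity a common invariant coordinate subspace, and then to invoke the easy direction of Burnside's theorem (recalled in the introduction) to conclude. First I would produce a suitable set of nodes. Since $B(A)$ is not strongly connected there are distinct nodes $i$ and $j$ such that no directed path leads from $i$ to $j$; in particular $n \geq 2$. Let $S \subseteq \{1,\dots,n\}$ be the set of nodes reachable from $i$ by a directed path, counting $i$ itself as reachable. Then $S$ is nonempty, $j \notin S$ so $S \neq \{1,\dots,n\}$, and $S$ is closed under out-edges: an edge from $u \in S$ to some node $w$ forces $w$ to be reachable from $i$, hence $w \in S$. (Equivalently, one may take $S$ to be a sink of the condensation of $B(A)$ into strongly connected components.) Consequently $B(A)$ has no edge from a node of $S$ to a node of the complement $S^{c}$.

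Next I would turn this into an invariant subspace. By Definition \ref{burnside-1}, the absence of an edge from $u$ to $v$ means $(A_{m})_{uv} = 0$ for every $m$; hence $(A_{m})_{uv}=0$ whenever $u \in S$, $v \in S^{c}$, and $1 \le m \le k$. Put $W = \operatorname{span}\{\,e_{v} : v \in S^{c}\,\} \subseteq \mathbb{C}^{n}$, where $e_{1},\dots,e_{n}$ is the standard basis; since $S^{c}$ is nonempty and proper, $0 \neq W \neq \mathbb{C}^{n}$. For $v \in S^{c}$ the $v$-th column of $A_{m}$ is $A_{m}e_{v} = \sum_{u}(A_{m})_{uv}e_{u}$, and every term with $u \in S$ vanishes, so $A_{m}e_{v} \in W$. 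Thus each $A_{m}$ maps $W$ into $W$, and since the matrices preserving $W$ form a subalgebra of $M_{n}(\mathbb{C})$ containing $A$, every element of $\mathcal{A}$ preserves $W$.

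Finally, $M_{n}(\mathbb{C})$ preserves no nontrivial subspace: choosing $u_{0} \in S^{c}$ and $v_{0} \in S$, the matrix unit $E_{v_{0}u_{0}}$ sends $e_{u_{0}} \in W$ out of $W$, so $E_{v_{0}u_{0}} \notin \mathcal{A}$ and $\mathcal{A} \neq M_{n}(\mathbb{C})$; this is the trivial half of Burnside's theorem. The argument is short and I do not foresee a substantive obstacle; the one place to be careful is the bookkeeping in the middle step, namely matching the closed node set $S$ with the correct coordinate subspace (spanned by $S^{c}$, not $S$) for the convention of matrices acting on column vectors, and remembering to check that $W$ is proper and nonzero — which is exactly where the genuine failure of strong connectivity enters.
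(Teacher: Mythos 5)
Your proof is correct and follows essentially the same route as the paper: extract from the failure of strong connectivity a set of nodes with no inbound edges from its complement, observe that the corresponding coordinate subspace is invariant under every $A_{m}$ and hence under $\mathcal{A}$, and conclude by the trivial direction of Burnside's theorem. The only difference is cosmetic — your reachability set $S$ handles in one stroke the two cases (disconnected versus connected-but-not-strongly-connected) that the paper treats separately, a unification the paper itself notes is possible.
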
 The proof of this theorem will be given in section $ \ref{obstacles} $.

This leads to our main theorem. Here we present a simplified version, to give an impression of the full version which can be found along with its proof in Section \ref{algebras}.

\begin{theorem}[Even-order constructibility - special case]\label{2_gens_thm_simple}
Let $H$ and $K$ be $2n \times 2n$ hermitian matrices over $\mathbb{C}$. If \begin{enumerate}

\item $K$ is diagonal with eigenvalues each of multiplicity 2,
\item $B(H, K)$ is strongly connected,
\item the top row of $2 \times 2$ blocks of $H$ are all invertible,
\item there exist distinct top-row $2 \times 2$ blocks $H_{1j}$ and $H_{1k}$ of $H$ such that $H_{1j}H_{1j}^{T}$ and $H_{1k}H_{1k}^{T}$ do not commute,
\end{enumerate} then the algebra $\mathcal{A}$ generated by $H$ and $K$ is the full algebra $M_{2n}(\mathbb{C})$. \\
\end{theorem}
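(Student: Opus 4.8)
The plan is to use the block structure that $K$ imposes and then reduce everything to a single $2\times 2$ corner of $\mathcal{A}$. First I would normalise: conjugating by a permutation matrix preserves both hermiticity and the property of generating $M_{2n}(\mathbb{C})$, so we may assume $K=\operatorname{diag}(\lambda_1 I_2,\dots,\lambda_n I_2)$ with distinct $\lambda_i$, and that the $2\times 2$ blocks of $H$ in hypotheses (1),(3),(4) are the honest diagonal blocks on coordinate pairs $\{2i-1,2i\}$. Because $K$ has $n$ distinct eigenvalues each of multiplicity $2$, Lagrange interpolation gives the block projections $E_i=\prod_{m\neq i}(K-\lambda_m I)/(\lambda_i-\lambda_m)\in\mathcal{A}$; these are orthogonal idempotents with $\sum_i E_i=I_{2n}$ and each commutes with $K$. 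Hence for all $i,j$ the ``single block'' matrix $\widehat{H_{ij}}:=E_iHE_j$ (equal to the $(i,j)$ block $H_{ij}$ of $H$ in block position $(i,j)$, zero elsewhere) lies in $\mathcal{A}$, and $\mathcal{A}=\bigoplus_{i,j}E_i\mathcal{A}E_j$ as a vector space.

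The key reduction is this: it is enough to show the corner algebra $E_1\mathcal{A}E_1$, regarded as a unital subalgebra of $M_2(\mathbb{C})$ under the identification of block slot $(1,1)$ with $M_2(\mathbb{C})$, equals $M_2(\mathbb{C})$. Granting that, hypothesis (3) --- invertibility of $H_{11},\dots,H_{1n}$, hence of each $H_{j1}=H_{1j}^{*}$ --- gives $E_j\mathcal{A}E_1\supseteq\widehat{H_{j1}}\cdot(E_1\mathcal{A}E_1)=\{\,H_{j1}Y: Y\in M_2(\mathbb{C})\,\}$, the full block slot $(j,1)$; symmetrically $E_1\mathcal{A}E_j$ is the full slot $(1,j)$; and multiplying a full $(i,1)$ slot by a full $(1,j)$ slot fills every slot $(i,j)$ (products $XW$ with $X,W\in M_2(\mathbb{C})$ span $M_2(\mathbb{C})$). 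Thus $\mathcal{A}=\bigoplus_{i,j}E_i\mathcal{A}E_j=M_{2n}(\mathbb{C})$. Hypothesis (2) is not actually needed in this special case: (3) already forces the block-adjacency graph on $\{1,\dots,n\}$ to be strongly connected, and in the full version of the theorem it is (2) that plays this role.

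To get at $E_1\mathcal{A}E_1$, decompose an arbitrary word in $H,K$ by inserting $I=\sum_iE_i$ between consecutive letters and use that a power of $K$ trapped between two copies of $E_i$ acts as a scalar on block $i$. One finds that $E_1\mathcal{A}E_1$ is the unital subalgebra of $M_2(\mathbb{C})$ generated by the products $H_{1i_1}H_{i_1i_2}\cdots H_{i_{m-1}1}$ over closed walks based at $1$ in the block-adjacency graph. In particular, for every $j$ it contains $H_{1j}H_{j1}=H_{1j}H_{1j}^{*}$, which by (3) is a positive-definite hermitian $2\times 2$ matrix (this is the matrix written $H_{1j}H_{1j}^{T}$ in hypothesis (4), the two agreeing whenever the entries are real), and by (4) two of these do not commute.

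Finally I would invoke the elementary fact that two non-commuting hermitian $2\times 2$ matrices generate $M_2(\mathbb{C})$ as a unital algebra: every proper unital subalgebra of $M_2(\mathbb{C})$ has an invariant line $\ell$, for a hermitian matrix an invariant line is reducing, so both matrices are diagonal in an orthonormal basis adapted to $\ell\oplus\ell^{\perp}$ and therefore commute, a contradiction. Applying this to the two non-commuting hermitian elements of $E_1\mathcal{A}E_1$ supplied by (4) yields $E_1\mathcal{A}E_1=M_2(\mathbb{C})$, and the reduction above completes the proof. The step demanding the most care is the corner reduction: checking that $E_1\mathcal{A}E_1$ is exactly the algebra of closed-walk products --- that nothing extraneous appears and that the identity does --- and that invertibility of the top-row blocks genuinely propagates fullness from the $(1,1)$ corner to all of $M_{2n}(\mathbb{C})$.
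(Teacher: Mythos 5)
Your proposal is correct and follows essentially the same route as the paper's proof of the full theorem: spectral idempotents of $K$ isolate the $2\times 2$ blocks of $H$, the two non-commuting hermitian products $H_{1j}H_{1j}^{*}$ generate $M_2(\mathbb{C})$ in the $(1,1)$ corner, and invertibility of the top-row blocks propagates this to all of $M_{2n}(\mathbb{C})$. Your refinements --- the explicit invariant-line argument for the $2\times 2$ corner, the direct (rather than iterative) filling of the slots, and the observation that hypothesis (2) is redundant given (3) --- are all sound and consistent with what the paper does.
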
 

Since the full algebra can always be 2-generated \cite{min_gens}, it is of particular interest to tell whether or not a given pair of matrices generates the full algebra. In the case of a pair of hermitian matrices, we can diagonalise one of them, and find a basis within which to assess the Burnside graph. The Burnside graph illustrates the decomposition, according to Burnside's theorem, of a set of matrices into blocks corresponding to invariant subspaces. 

Our work was motivated by the 1951 conjecture of Rudolf Kippenhahn \cite{kip}: 

\begin{conj}[Kippenhahn \cite{kip}] \label{kip}
Let and $H, K$ be hermitian $2n \times 2n$ matrices, and let $f = \det(xH + yK + I) \in \mathbb{R}[x,y]$. Let $\mathcal{A}$ be the algebra generated by $H$ and $K$. If there exists $ g \in \mathbb{C}[x,y]$ such that $ f = g^{k}$ then there is some unitary matrix $U$ such that $U^{*}(xH + yK)U$ is block diagonal, and thus $ \mathcal{A} \neq M_{n}(\mathbb{C})$.
\end{conj}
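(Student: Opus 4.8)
The plan is to convert the determinantal hypothesis into information about the spectrum of the pencil $xH+yK$, and then to manufacture a subspace invariant under both $H$ and $K$. Since $H$ and $K$ are hermitian, any common invariant subspace $W$ is automatically reducing ($W^{\perp}$ is then also invariant), so it would suffice to produce one such $W$; an adapted orthonormal basis then exhibits $U^{*}(xH+yK)U$ as block diagonal, and puts us in the situation of Theorem~\ref{not_gen_full_thm} with a Burnside graph that is not strongly connected (equivalently, one may quote Burnside's theorem directly), whence $\mathcal{A} \neq M_{2n}(\mathbb{C})$. The reverse implication --- that a block decomposition forces $f$ to factor --- is immediate; the content of the conjecture is the direction just described.

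First I would homogenize. Set $p(x,y,z) = \det(xH+yK+zI)$, homogeneous of degree $2n$ and monic of degree $2n$ in $z$, with dehomogenization $p(x,y,1)=f(x,y)$; assuming as we may that the pencil is regular, $p$ is the homogenization of $f$. Homogenization is multiplicative, so $f = g^{k}$ forces $p = q^{k}$ with $q$ the homogenization of $g$, of degree $m := 2n/k$. Reading off the roots in $z$: for generic $(x,y)$ the eigenvalues of $xH+yK$ are the roots of $q(x,y,\cdot)$, each of multiplicity a multiple of $k$. Taking $k \geq 2$ maximal, the hypothesis says exactly that the characteristic curve of the pencil is everywhere non-reduced --- every eigenvalue of the generic $xH+yK$ is repeated. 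So the conjecture is equivalent to: a hermitian pencil all of whose eigenvalues are repeated must be unitarily block-diagonalizable.

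Next I would try to build $W$. At a generic direction one has an orthogonal eigendecomposition $\mathbb{C}^{2n} = E_{1} \oplus \cdots$ with each $\dim E_{i} \geq 2$, varying algebraically with $(x,y)$. The natural object to examine is the incidence variety $Z = \{((x:y:z),[v]) : (xH+yK+zI)v = 0\} \subset \mathbb{P}^{2} \times \mathbb{P}^{2n-1}$, which fibres over a dense open subset of the curve $\{q=0\}$ with fibres of projective dimension $\geq 1$; one hopes its image in $\mathbb{P}^{2n-1}$ spans a proper subspace $W$, and that $W$ is carried into itself by $H$ and $K$. A more concrete variant is to study the spectral projections $P_{i}(x,y)$ onto the $E_{i}$, algebraic matrix-valued functions of $(x,y)$, and ask whether $H$, $K$ and the $P_{i}$ can fail to generate $M_{2n}(\mathbb{C})$.

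The hard part --- and, I suspect, why this is stated as a conjecture rather than a theorem --- is precisely this last step. The hypothesis constrains only the characteristic polynomial, hence only the similarity data of each matrix separately, whereas block-diagonalizability is a joint, unitary-invariant property of the pair, and there is no visible mechanism forcing a coincidence of eigenvalue multiplicities to be witnessed by a genuine common invariant subspace. I would therefore expect the approaches above to break down in general: the eigenspace bundle over $\{q=0\}$ need not have proper total span, and the projections $P_{i}$ need not generate a reducible algebra. Deciding whether a given repeated-spectrum pencil nonetheless generates all of $M_{2n}(\mathbb{C})$ is exactly what the Burnside-graph criteria of Theorems~\ref{not_gen_full_thm} and~\ref{2_gens_thm_simple} are designed to settle, and that is presumably where the announced counterexamples of order $8 \times 8$ and larger come from.
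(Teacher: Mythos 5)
You were asked to prove a statement that the paper records only as a \emph{conjecture}, and the paper contains no proof of it; on the contrary, the whole point of Section~\ref{kip_section} is to \emph{disprove} it. Laffey's example \eqref{laffey_ex} already refutes it for $8\times 8$ matrices, and the paper's own contribution is a one-parameter family of counterexamples for every order $\geq 8$: one takes skew-symmetric $A,B$ with $B^{2}=-I_{2n}$, sets $H=A^{2}$ and $K=AB+BA$, so that $(Ax+By)^{2}=Hx^{2}+Kxy-Iy^{2}$ and the conjugate-pair structure $\pm i\lambda$ of the spectrum of a real skew-symmetric matrix forces every eigenvalue of $xH+yK$ to have even multiplicity (Lemma~\ref{eigenpairs}), while Theorem~\ref{2_gens_thm} shows that $H$ and $K$ nonetheless generate all of $M_{2n}(\mathbb{C})$. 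So there is no correct proof to compare yours against, and any completed ``proof'' would necessarily contain an error.

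That said, your assessment is essentially the right one. Your reduction is sound as far as it goes: homogenizing $f$ correctly translates the hypothesis $f=g^{k}$ into the statement that every eigenvalue of the pencil is repeated; a common invariant subspace of two hermitian matrices is indeed automatically reducing, so producing one would suffice; and you correctly decline to claim that the eigenspace data over the curve $\{q=0\}$ must span a proper subspace. The obstruction you name --- that the hypothesis constrains only the characteristic polynomial of the pencil, while reducibility is a joint property of the pair with no mechanism forcing the multiplicities to be witnessed by an actual invariant subspace --- is precisely the gap the paper's construction exploits: the squaring trick manufactures the multiplicities spectrally, ``for free,'' without creating any invariant subspace, and the Burnside-graph machinery then certifies irreducibility. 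The only caveats worth recording are that the conjecture is not vacuous in low dimension (it is a theorem for order $\leq 6$ in the irreducible-power form, by Kippenhahn, Shapiro, and Buckley, so your heuristic that ``the approaches should break down in general'' must be read as applying only from order $8$ on), and that your parenthetical claim that a block decomposition makes the converse ``immediate'' gives only a factorization $f=f_{1}f_{2}$, not $f=g^{k}$; but neither affects your main, correct, conclusion that the statement as posed is false and cannot be proved.
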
 Our goal has been to further understanding of the counterexamples to this conjecture. Kippenhahn orginally gave a more general form of this conjecture where $f$ is permitted to be a product of more than one irreducible polynomial. Kippenhahn's conjecture linked the multiplicity of eigenvalues of a certain type of matrix polynomial to the algebra generated by the coefficients of that polynomial. The claim was that, given hermitian $H$ and $K$, if the polynomial $H x + K y$ for scalar $x$ and $y$ has, for all $x$ and $y$, eigenspaces each of even dimension, then $H$ and $K$ cannot generate the full algebra. More intuitively, if $H x + K y$ had square characteristic polynomial $f^{2}$, then the conjecture was that one could transform $H$ and $K$ simultaneously into block diagonal form, and each block would have characteristic polynomial $f$. In his paper \cite{kip}, Kippenhahn proved that his conjecture holds for $n \leq 2$. Shapiro extended the validity range of the conjecture in a series of 1982 papers. In her first paper \cite{shapiro1} she demonstrated that if $f$ has a linear factor of multiplicity greater than $n/3$, then the conjecture holds. This proves the conjecture for $n = 3$. Her second paper \cite{shapiro2} shows that if $f = g^{n/2} $ where $g$ is quadratic, then the conjecture holds. This, combined with \cite{shapiro1}, proves the conjecture for $n = 4 $ and 5. Her final paper \cite{shapiro3} showed that the conjecture holds if $f$ is a power of a cubic factor. This is sufficient to prove the conjecture for order 6 in the form we are interested in, where $f$ is a power of an irreducible polynomial, but not Kippenhahn's orginal form. Buckley recently gave a proof of the same result as a corollary to a more general result about Weirstrass cubics \cite{buckley}. 

In his 1987 paper \cite{laffey}, Laffey disproved the simple form of the conjecture for $ n = 8$ with a single counterexample: \begin{equation}\label{laffey_ex}\begin{split} H &=  {\small \left( \begin{array}{cccccccc}
-122 & 0 & 12 & 18 & -30 & 18 & 26 & 10 \\
0 & -122 & -6 & -12 & -16 & -28 & 20 & -16 \\
12 & -6  & -218 & 0 & 44 & 8 & 24 & 12 \\
18 & -12 & 0 & -218 & -2 & -34 & -10 & 22 \\
-30 & -16 & 44 & -2 & -216 & 0 & -12 & -8 \\
18 & -28 & 8 & -34 & 0 & -216 & -8 & 36 \\
26 & 20 & 24 & -10 & -12 & -8 & -120 & 0 \\
10 & -16 & 12 & 22 & -8 & 36 & 0 & -120 
\end{array} \right)} ,  \\  &\qquad \qquad \; K ={\small \left( \begin{array}{cccccccc}
-4 & 0 & 0 & 0 & 0 & 0 & 0 & 0 \\
0 & -4 & 0 & 0 & 0 & 0 & 0 & 0 \\
0 & 0 & 4 & 0 & 0 & 0 & 0 & 0 \\
0 & 0 & 0 & 4 & 0 & 0 & 0 & 0 \\
0 & 0 & 0 & 0 & -8 & 0 & 0 & 0 \\
0 & 0 & 0 & 0 & 0 & -8 & 0 & 0 \\
0 & 0 & 0 & 0 & 0 & 0 & 8 & 0 \\
0 & 0 & 0 & 0 & 0 & 0 & 0 & 8
\end{array} \right)}. \end{split} \end{equation}

In 1998 Li, Spitkovsky, and Shukla disproved Kippenhahn's more general form of the conjecture for $n = 6$ by constructing a family of counterexamples of the form $f = \mbox{det}(I + xH + yK) = g^{2}h $, where both $g$ and $h$ are quadratics \cite{li}. 

We have used our results to construct an one-parameter family of counterexamples of Kippenhahn's conjecture, for square matrices of order $ n \geq 8$, in Section 4. This places Laffey's single counterexample into a wider context, and provides a novel way for constructing polynomial matrices which non-trivially have square determinant. We refer to \cite{kv} for a positive solution to a quantized version of Kippenhahn's conjecture.

Linear matrix polynomials, also called linear pencils, are 
a key tool in matrix theory and numerical analysis
(e.g. the generalized eigenvalue problem), 
and they frequently appear in (real) algebraic geometry (cf. \cite{vinnikov,NPT}).
Furthermore, linear pencils whose coefficients are hermitian matrices give rise
to linear matrix inequalities (LMIs). 
LMIs produce feasible regions of semidefinite programs (SDPs) \cite{sdp},
which are currently a hot topic in mathematical optimization.

\section{Obstacles to generating the full algebra}\label{obstacles}

In this section we prove Theorem \ref{not_gen_full_thm}, and another theorem clearly linking the Burnside graph of a set of matrices to Burnside's theorem for matrix algebras. To begin with, we add an extra definition pertaining to Burnside graphs, and present an example.

\begin{defn}[Strongly connected component]
A \textbf{strongly connected component} of a Burnside graph $B(A)$ is a maximal subset of nodes in which every pair of nodes are path connected in both directions.

\end{defn}

\begin{example}
Suppose that $$A_{1} = \left( \begin{array}{cccccc}
1 & 1 & 0 & 0 & 1 & 0 \\
0 & 1 & 0 & 1 & 0 & 0 \\
0 & 0 & 1 & 1 & 1 & 1 \\
1 & 1 & 0 & 1 & 1 & 0 \\
1 & 0 & 0 & 1 & 0 & 0 \\
1 & 0 & 1 & 0 & 1 & 0 
\end{array} \right) \mbox{ and } A_{2} = \left( \begin{array}{cccccc}
1 & 0 & 0 & 0 & 0 & 0 \\
0 & 1 & 0 & 0 & 0 & 0 \\
0 & 0 & 1 & 0 & 0 & 0 \\
0 & 0 & 0 & 2 & 0 & 0 \\
0 & 0 & 0 & 0 & 2 & 0 \\
0 & 0 & 0 & 0 & 0 & 2 
\end{array} \right).$$ \\ \\ The Burnside graph of $\left\lbrace A_{1}, A_{2} \right\rbrace $ is as follows: \begin{center}
\includegraphics[scale=.30]{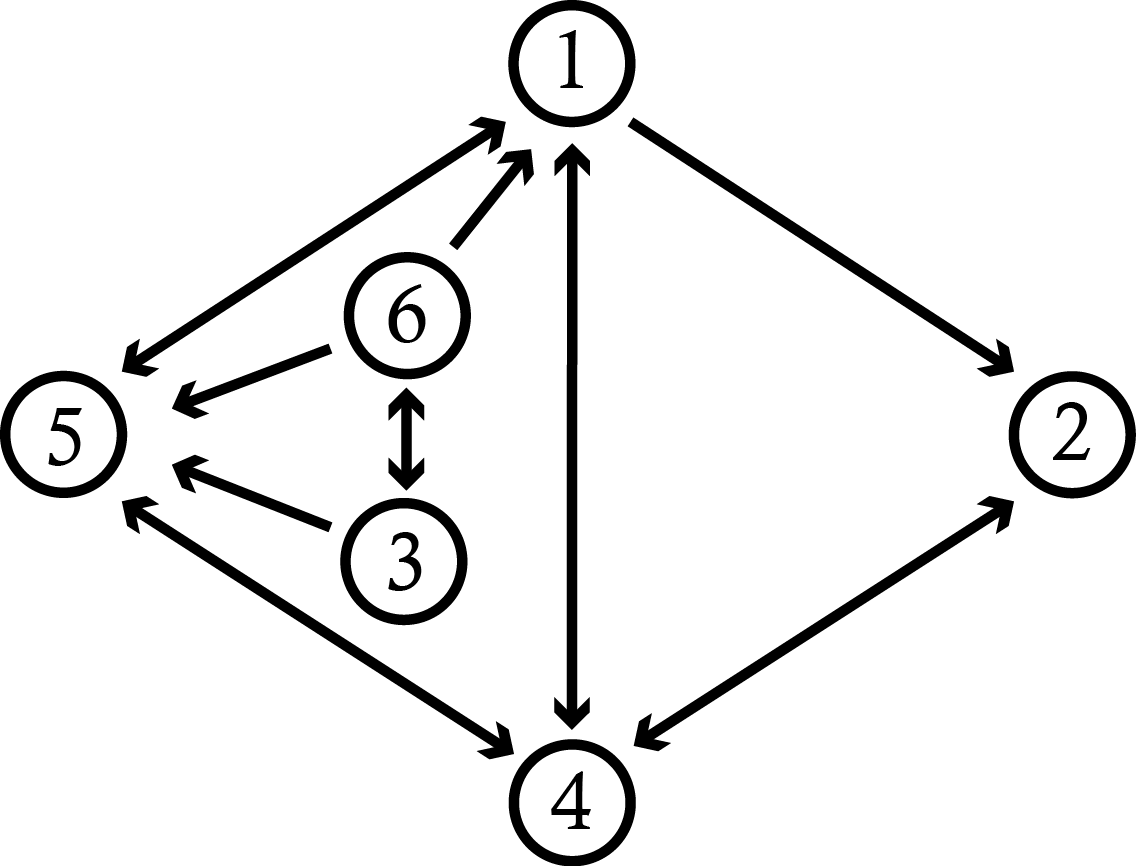}
\end{center}
The graph $B(A)$ is not strongly connected. Nodes 6 and 3 form a strongly connected component, but there is no way to get to these nodes from the rest of the graph. The strongly connected components are  $\left\lbrace 1, 2, 4, 5 \right\rbrace$ and $\left\lbrace 6, 3 \right\rbrace$.
\end{example}

Now we will prove Theorem \ref{not_gen_full_thm}: \begin{proof}[Proof of Theorem \ref{not_gen_full_thm}]
Since $B(A)$ is not strongly connected, there are two possibilities: \begin{enumerate}
\item $B(A)$ consists of at least 2 disconnected components,
\item $B(A)$ is connected but not strongly connected.
\end{enumerate} Strictly speaking, both of these cases can be dealt with at once, but we have separated them for clarity. \\ \emph{Case 1}: The graph can be split into two subsets which do not connect to each other. Renumbering the nodes corresponds to symmetrically permuting the columns and rows of the set of matrices $A$ and so does not affect the dimension of $\mathcal{A}$. Suppose then that nodes $1,...,j$ form one subset, and nodes $j+1,...,k$ form the other, and there are no edges joining one subset to the other. This means that, for every $A_{i}$: \begin{itemize}
\item for each row up to and including row $j$, every entry beyond the $j^{th}$ column is zero,
\item for each column up to and including column $j$, every entry beyond the $j^{th}$ row is zero.
\end{itemize}  This means that every matrix $A_{i}$ is in block diagonal form, and so $\mathcal{A} \neq M_{n}(\mathbb{C})$. By way of illustration, if $A_{1}$ and $A_{2}$ are as follows: $$A_{1} = \left( \begin{array}{cccccc}
1 & 0 & 0 & 0 & 1 & 0\\
0 & 0 & 1 & 0 & 0 & 0\\
0 & 0 & 0 & 0 & 0 & 0\\
0 & 0 & 0 & 1 & 0 & 0\\
0 & 0 & 0 & 0 & 0 & 0\\
1 & 0 & 0 & 0 & 0 & 1
\end{array} \right), A_{2} = \left( \begin{array}{cccccc}
0 & 0 & 0 & 0 & 0 & 0\\
0 & 0 & 0 & 1 & 0 & 0\\
0 & 0 & 0 & 1 & 0 & 0\\
0 & 1 & 0 & 0 & 0 & 0\\
1 & 0 & 0 & 0 & 0 & 0\\
0 & 0 & 0 & 0 & 1 & 0
\end{array} \right), $$ then the corresponding Burnside graph (and its permutation) is\begin{center}
\includegraphics[scale=.30]{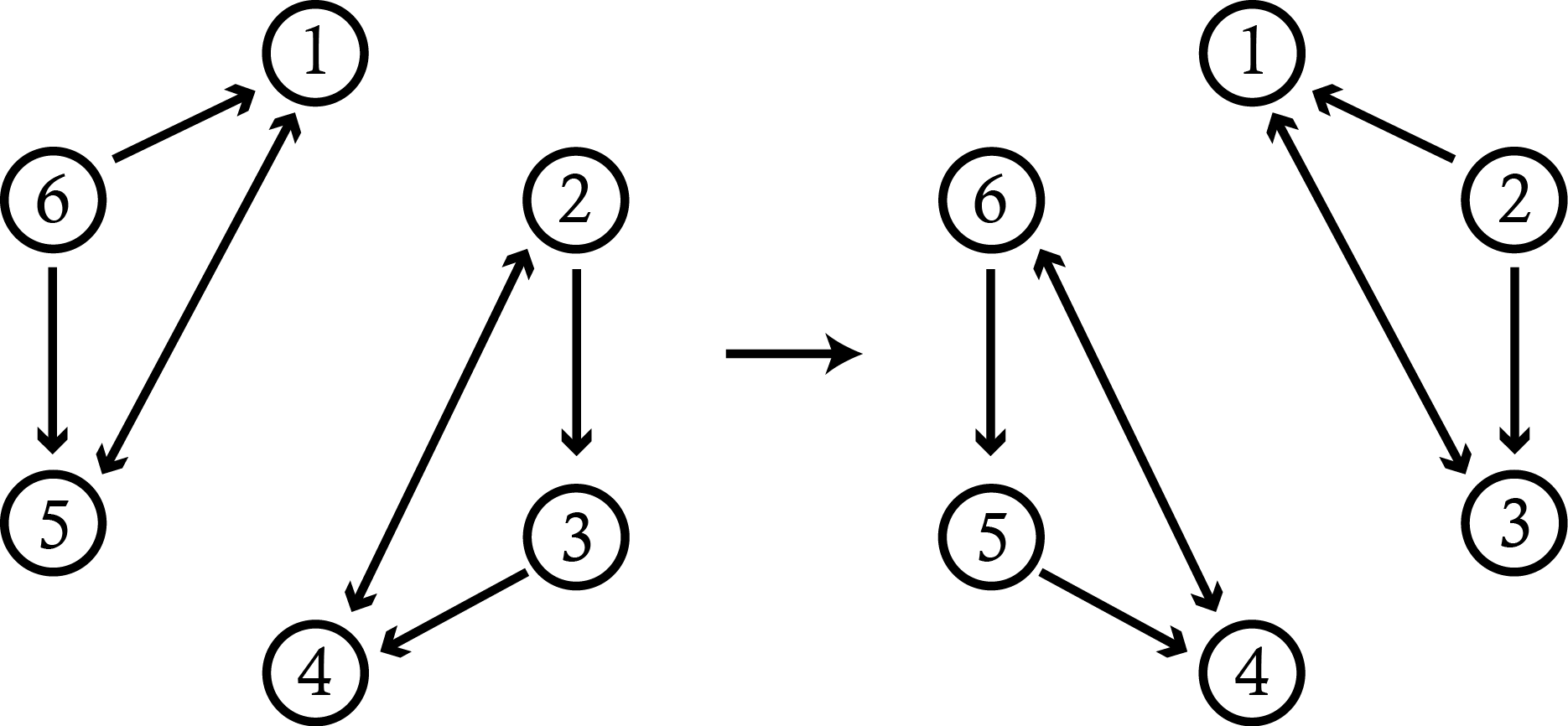}
\end{center} leading to the following block diagonal matrices obtained for the $A_{i}$: $$\tilde{A}_{1} = \left( \begin{array}{ccc|ccc}
1 & 0 & 1 & 0 & 0 & 0\\
1 & 1 & 0 & 0 & 0 & 0\\
0 & 0 & 0 & 0 & 0 & 0\\
\hline
0 & 0 & 0 & 1 & 0 & 0\\
0 & 0 & 0 & 0 & 0 & 0\\
0 & 0 & 0 & 0 & 1 & 0
\end{array} \right), \tilde{A}_{2} = \left( \begin{array}{ccc|ccc}
0 & 0 & 0 & 0 & 0 & 0\\
0 & 0 & 1 & 0 & 0 & 0\\
1 & 0 & 0 & 0 & 0 & 0\\
\hline
0 & 0 & 0 & 0 & 0 & 1\\
0 & 0 & 0 & 1 & 0 & 0\\
0 & 0 & 0 & 1 & 0 & 0
\end{array} \right). $$ \\ \\ \emph{Case 2}: Organise $B(A)$ into strongly connected components, which may consist of a single node. Reorder columns and rows so that these strongly connected components consist of consecutive nodes. Reordering columns and rows will not affect the dimension of the algebra $\mathcal{A}$.

If every strongly connected component had at least one inbound edge and at least one outbound edge, then a cycle would exist and $B(A)$ would be strongly connected. Therefore, without loss of generality we can assume that there is at least one node with no inbound edge. By symmetric reordering of rows and columns we can suppose that this is the first strongly connected component, associated with nodes $\left \lbrace 1,...,j \right \rbrace$, where $j$ may be equal to 1. Recall how edges are defined: a directed edge $i \rightarrow j$ exists if and only if there is at least one matrix in $A$ with a non-zero entry at the $(i,j)$ position. Therefore, since the first strongly connected component is not the endpoint for any edge, every matrix in $A$ must have all zeros in the first $j$ columns, beyond the $j^{th}$ position: $$ \left( \begin{array}{c|c}
j \times j  & j \times (n-j)  \\
\, & \, \\
\hline \\
\makebox(0,15){\text{\huge0}} & (n-j) \times (n-j)
\end{array} \right). $$ In other words, it must be possible to rearrange the matrix rows and columns into this block form, with a zero block in the lower left. Therefore, the subspace spanned by the first $j$ canonical basis vectors $\left \lbrace e_{1}, e_{2},...,e_{j} \right \rbrace$ of $F^k$ is invariant under every matrix in $A$ and therefore also invariant under the algebra $\mathcal{A}$, and so $\mathcal{A} \neq M_{n}(\mathbb{C})$. For example: $$ A_{1} = \left( \begin{array}{c|ccc}
0 & 0 & 0 & 0 \\
1 & 0 & 1 & 0 \\
1 & 0 & 0 & 1 \\
\hline 0 & 0 & 0 & 0
\end{array} \right), A_{2} =  \left( \begin{array}{c|ccc}
0 & 0 & 0 & 1 \\
0 & 1 & 0 & 0 \\
0 & 0 & 0 & 0 \\
\hline 0 & 0 & 0 & 0
\end{array} \right), $$ \begin{center}
\includegraphics[scale=.30]{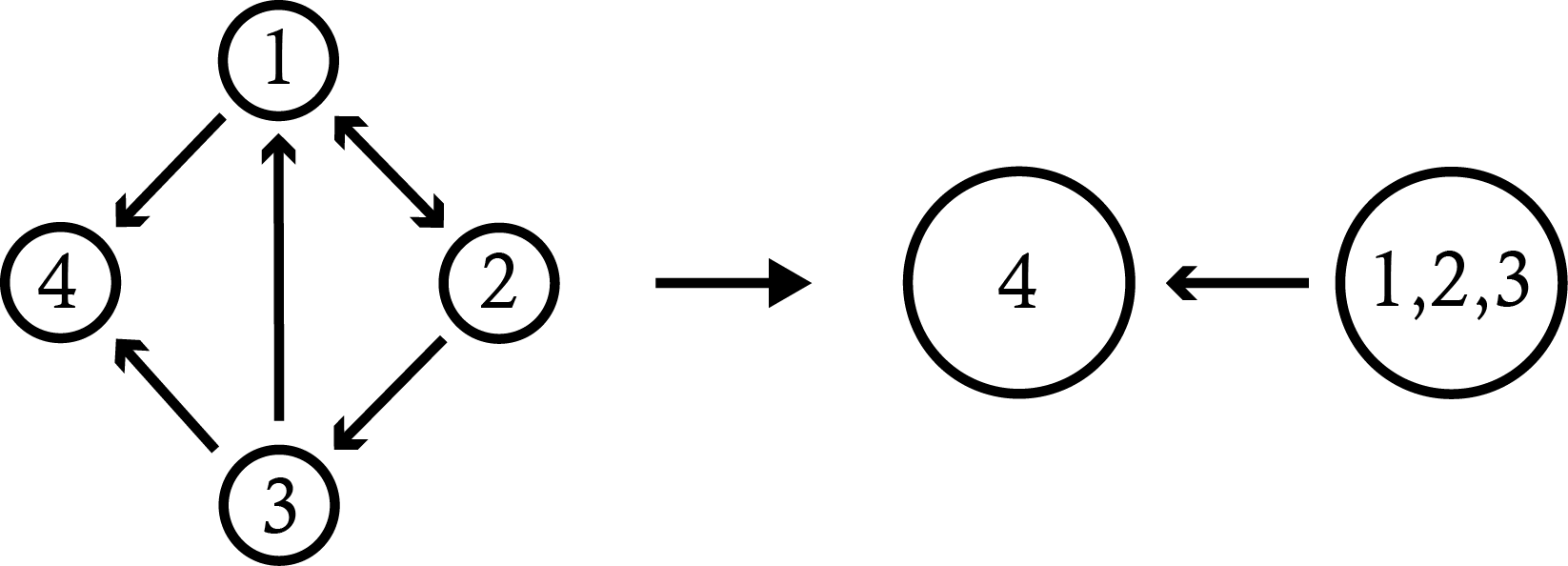}
\end{center} To handle the situation where the strongly connected component has no outbound edge, take the transpose of every $A_{i}$, which will not affect the dimension of $\mathcal{A}$, and repeat the argument.
\end{proof}

Now we will see the origin of the term Burnside graph. An algebra of linear transformations on a vector space is \emph{irreducible} if the only invariant subspaces with respect to the algebra are $\left\lbrace 0 \right \rbrace$ and the entire vector space. Recall Burnside's theorem for matrix algebras: 

\begin{theorem}[Burnside \cite{burnside_proof}]\label{burnside}
Let $V$ be a finite dimension vector space over $\mathbb{C}$, with $\dim (V) > 1$. The only irreducible algebra of linear transformations on $V$ is the full algebra $M_{\dim (V)}(\mathbb{C})$.
\end{theorem}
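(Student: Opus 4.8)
The plan is to show that an irreducible subalgebra $\mathcal{A}$ of $M_{n}(\mathbb{C})$ with $n = \dim(V) > 1$ must contain every rank-one operator on $V$; since the rank-one operators span $M_{n}(\mathbb{C})$, this forces $\mathcal{A} = M_{n}(\mathbb{C})$. The argument rests on three ingredients: a transitivity property extracted from irreducibility, the production of a single rank-one element of $\mathcal{A}$, and a ``spreading'' step that propagates it to all rank-one operators.

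First I would record the transitivity property: if $\mathcal{A}$ is irreducible then $\mathcal{A}v = V$ for every nonzero $v \in V$. Indeed $\{x \in V : \mathcal{A}x = 0\}$ and $\mathcal{A}v$ are both $\mathcal{A}$-invariant subspaces; since $\dim(V) > 1$, the zero algebra is not irreducible (any line is a proper nonzero invariant subspace), so $\mathcal{A} \neq 0$, which forces the first subspace to be $0$ and hence $\mathcal{A}v \neq 0$, so $\mathcal{A}v = V$. The same reasoning applies to the transpose algebra $\mathcal{A}^{T} = \{A^{T} : A \in \mathcal{A}\}$ acting on $V^{*}$: a subspace $U \subseteq V^{*}$ is $\mathcal{A}^{T}$-invariant if and only if its annihilator in $V$ is $\mathcal{A}$-invariant, so $\mathcal{A}^{T}$ is irreducible and $\mathcal{A}^{T}\phi = V^{*}$ for every nonzero $\phi \in V^{*}$.

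The main step — and the one I expect to be the real obstacle — is to produce a rank-one element. Let $r$ be the least positive rank attained on $\mathcal{A}$, pick $T \in \mathcal{A}$ of rank $r$, and set $W = \operatorname{range}(T)$, so $\dim(W) = r$. For any $A \in \mathcal{A}$ the operator $TA$ carries $W$ into $W$, so $TA|_{W}$ has an eigenvalue $\lambda \in \mathbb{C}$; then $TAT - \lambda T$ lies in $\mathcal{A}$ and its range equals $(TA - \lambda I)(W)$, which has dimension $< r$. By minimality of $r$ this element is $0$, hence $TA|_{W} = \lambda I_{W}$ — that is, $T$ composed with any element of $\mathcal{A}$ acts as a scalar on $W$. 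Now fix a nonzero $w \in W$; then $T(\mathcal{A}w) \subseteq \mathbb{C}w$, and since $\mathcal{A}w = V$ by transitivity we get $\operatorname{range}(T) \subseteq \mathbb{C}w$, i.e. $r = 1$.

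Finally, with a rank-one $T \in \mathcal{A}$ written as $T(x) = \phi_{0}(x)\,v_{0}$ (so $v_{0} \neq 0$ and $\phi_{0} \neq 0$), a direct computation gives $(ATB)(x) = (B^{T}\phi_{0})(x)\,(Av_{0})$ for all $A, B \in \mathcal{A}$. As $A$ ranges over $\mathcal{A}$ the vector $Av_{0}$ ranges over all of $V$, and as $B$ ranges over $\mathcal{A}$ the functional $B^{T}\phi_{0}$ ranges over all of $V^{*}$; since each $ATB$ lies in $\mathcal{A}$, it follows that $\mathcal{A}$ contains every rank-one operator on $V$, and therefore $\mathcal{A} = M_{n}(\mathbb{C})$. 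I would expect the only other delicate point to be the bookkeeping around the (possibly non-unital) algebra structure, which is handled uniformly by the transitivity observation above.
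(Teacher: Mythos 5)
The paper does not prove this statement at all: Theorem \ref{burnside} is imported as a classical result with a citation to Bre\v{s}ar, so there is no in-paper argument to compare against. Your proposal is a complete and correct standalone proof --- it is essentially the standard ``minimal rank'' proof of Burnside's theorem (in the style of Lomonosov--Rosenthal, and close in spirit to the treatment in the cited reference). All three steps check out: the transitivity claim $\mathcal{A}v = V$ is justified correctly, including the observation that $\dim(V)>1$ rules out the zero algebra and that $\{x : \mathcal{A}x = 0\}$ and $\mathcal{A}v$ are invariant; the minimal-rank step correctly uses the existence of an eigenvalue of $TA|_{W}$ over $\mathbb{C}$ to force $TAT - \lambda T$ to have rank $< r$ and hence vanish, and then uses transitivity to conclude $r=1$; and the spreading step $ATB = (B^{T}\phi_{0}) \otimes (Av_{0})$ together with transitivity of $\mathcal{A}$ on $V$ and of $\mathcal{A}^{T}$ on $V^{*}$ yields all rank-one operators, which span $M_{n}(\mathbb{C})$. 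You also correctly anticipate and dispose of the non-unitality issue. The only cosmetic remark is that one should note $\mathcal{A}$ contains some element of positive rank (immediate from $\mathcal{A}\neq 0$) before taking the minimum $r$; this is implicit in your argument.
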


We have then the following corollary.

\begin{cor}\label{burnside_decomposition}
Every matrix algebra $\mathcal{A}$ over $\mathbb{C}$ can be put into a block upper triangular form, where the diagonal blocks are full sub-algebras $M_{n_{i}}(\mathbb{C})$ for some $n_{i} \in \mathbb{N}$.
\end{cor}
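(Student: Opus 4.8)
The plan is to argue by induction on $n = \dim(V)$, using Burnside's theorem (Theorem \ref{burnside}) as the engine and a choice of proper invariant subspace to drive the recursion. For the base case $n = 1$ there is nothing to prove, since (taking ``matrix algebra'' to mean a unital subalgebra of $\mathrm{End}(V)$) the only option is $M_1(\mathbb{C}) = \mathbb{C}$ itself. For the inductive step, regard $\mathcal{A}$ as an algebra of linear transformations on $V = \mathbb{C}^n$. If $\mathcal{A}$ is irreducible, then Theorem \ref{burnside} gives $\mathcal{A} = M_n(\mathbb{C})$ and we are done with a single diagonal block of size $n$.

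Otherwise there is a proper nonzero subspace $W \subsetneq V$ invariant under $\mathcal{A}$. I would pick a basis of $W$ and extend it to a basis of $V$; in this basis every $a \in \mathcal{A}$ acquires the block shape $\left( \begin{smallmatrix} \rho(a) & * \\ 0 & \sigma(a) \end{smallmatrix} \right)$, where $\rho(a)$ is the restriction of $a$ to $W$ and $\sigma(a)$ is the transformation induced by $a$ on $V/W$. Both $\rho$ and $\sigma$ are algebra homomorphisms, so $\rho(\mathcal{A})$ is a matrix algebra acting on $W$ and $\sigma(\mathcal{A})$ is a matrix algebra acting on $V/W$, each on a space of dimension strictly less than $n$.

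Next I would apply the inductive hypothesis separately to $\rho(\mathcal{A})$ and to $\sigma(\mathcal{A})$: there are bases of $W$ and of $V/W$ in which each of these algebras is block upper triangular with full matrix algebras as its diagonal blocks. Lifting the second basis arbitrarily to a complement of $W$ in $V$ and combining it with the new basis of $W$ produces a basis of $V$; the associated change of basis is block diagonal with respect to $V = W \oplus (V/W)$, so conjugating by it preserves the block upper triangular shape coming from $W \subsetneq V$ while simultaneously triangularising the $W$- and $(V/W)$-parts. Concatenating the two lists of diagonal blocks then gives the claimed block upper triangular form for all of $\mathcal{A}$, with every diagonal block a full $M_{n_i}(\mathbb{C})$.

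I expect the only delicate point — and the step that most deserves care in the write-up — to be the bookkeeping in this last paragraph: one must check that the block-diagonal conjugation does not disturb the zero block below $W$, and that the off-diagonal ``$*$'' block (which is controlled by neither $\rho$ nor $\sigma$) merely folds into the strictly-upper part of the final form without affecting any diagonal block. The genuinely substantive input, that an irreducible algebra is everything, is precisely Theorem \ref{burnside} and is taken as given.
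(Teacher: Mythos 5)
Your proof is correct and follows essentially the same route as the paper: peel off an invariant subspace, apply Burnside's theorem to the irreducible pieces, and assemble the block upper triangular form. The only difference is cosmetic but in your favour: the paper iterates by taking a \emph{minimal} invariant subspace $U_1$ and then ``repeating on $U_1^{\perp}$'' (which is not itself $\mathcal{A}$-invariant in general), whereas your formulation via the quotient $V/W$ and induction on $\dim V$ makes that recursion precise.
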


\begin{proof}
Let $V$ be the vector space upon which $\mathcal{A}$ acts. Take the smallest $\mathcal{A}$-invariant subspace of $V$, denoted $U_{1}$. Then we have $V = U_{1} \oplus U^{\perp}$.  Note that if the smallest invariant subspace is all of $V$, then straight away we have that $\mathcal{A}$ is the full algebra (by Theorem \ref{burnside}), which would trivially satisfy the corollary. So assume that $U_{1}$ is a proper subspace of $V$.

Let $u_{1}$ be an orthonormal basis for $U_{1}$, and $v_{1}$ and orthonormal basis for $ U^{\perp}$. Then $( u_{1}, v_{1} )$ is an orthonormal basis for $V$. With respect to this basis, then, $\mathcal{A}$ must have the form $$\left( \begin{array}{ccc|ccccc}
\, & \uparrow & \,  & \; & \; & \; & \; & \; \\
\longleftarrow & \dim (U_{1}) & \longrightarrow  & \; & \; & \hdots & \; & \; \\
\, & \downarrow & \,  & \; & \; & \; & \; & \; \\ \hline
\, & \, & \,  & \; & \; & \; & \; & \;   \\ 
\, & \makebox(0,0){\text{\huge0}} & \,  & \; & \; & \hdots & \; & \; \\
\, & \, & \,  & \; & \; & \; & \; & \; \\
\end{array} \right) $$ to ensure that $U_{1}$ is invariant. Examine the $\dim (U_{1})$ block in the top left. Since we have assumed that $U_{1}$ is the smallest invariant subspace of $V$, there can be no $\mathcal{A}$ invariant subspaces of $U_{1}$. Treating this upper left block as a sub-algebra acting on $u_{1}$ embedded within $V$, Burnside's theorem then tells us that this sub-algebra can only be the full algebra $M_{\dim (U_{1})}(\mathbb{C})$: $$ \mathcal{A} = \left( \begin{array}{ccc|ccccc}
\, & \, & \,  & \; & \; & \; & \; & \; \\
\, & M_{\dim (U_{1})}(\mathbb{C}) & \,  & \; & \; & \hdots & \; & \; \\
\, & \, & \,  & \; & \; & \; & \; & \; \\ \hline
\, & \, & \,  & \; & \; & \; & \; & \;   \\ 
\, & \makebox(0,0){\text{\huge0}} & \,  & \; & \; & \hdots & \; & \; \\
\, & \, & \,  & \; & \; & \; & \; & \; \\
\end{array} \right). $$ Repeating this process for $U_{1}^{\perp}$, we can work our way through all of $V$ and put $\mathcal{A}$ in the required block upper triangular form, with copies of the full algebra of various sizes down the diagonal: $$\mathcal{A} = \left( \begin{array}{ccccc}
M_{\dim (U_{1})}(\mathbb{C}) & \hdots & \hdots & \hdots & \hdots  \\
\, &  M_{\dim (U_{2})}(\mathbb{C}) & \hdots & \hdots & \hdots \\
\, & \, & M_{\dim (U_{3})}(\mathbb{C}) & \hdots & \hdots \\
\, & \makebox(0,0){\text{\huge0}} & \, & \ddots & \, \\
\, & \, & \, & \, & M_{\dim (U_{n})}(\mathbb{C})

\end{array} \right). $$
\end{proof}

The connection with the Burnside graph is now clear. Putting a set of matrices $\mathcal{A}$ into what we may call Burnside form, as in Corollary \ref{burnside_decomposition}, gives a Burnside graph where the strongly connected components correspond to the diagonal blocks, and the non-zero upper blocks correspond to the connections between strongly connected components, making allowance for left and right invariance corresponding to inbound and outbound edges.

\begin{cor}\label{sym_H_cor}
If every $A_{i}$ in $A$ is hermitian, and $B(A)$ is not strongly connected, then every $A_{i}$ in $A$ can be put simultaneously in block diagonal form with an orthogonal transformation.
\end{cor}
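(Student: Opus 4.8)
The plan is to reduce to Case 1 of the proof of Theorem \ref{not_gen_full_thm} by exploiting the symmetry that hermiticity imposes on the Burnside graph. First I would observe that if every $A_{i}$ is hermitian, then $(A_{i})_{ij} \neq 0$ if and only if $(A_{i})_{ji} = \overline{(A_{i})_{ij}} \neq 0$. Hence an edge $i \to j$ is present in $B(A)$ if and only if the edge $j \to i$ is present; that is, $B(A)$ is a symmetric directed graph. For a symmetric directed graph, being connected in the underlying undirected sense is equivalent to being strongly connected, so the hypothesis that $B(A)$ is not strongly connected forces $B(A)$ to be disconnected, i.e. to have at least two connected components. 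This is precisely the situation of Case 1.

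Next I would let $C_{1}, \dots, C_{r}$ with $r \geq 2$ be the connected components of $B(A)$, which partition the node set $\left\lbrace 1, \dots, n \right\rbrace$, and choose a permutation $\sigma$ of $\left\lbrace 1, \dots, n \right\rbrace$ that lists the indices of $C_{1}$ first, then those of $C_{2}$, and so on, so that each block of consecutive indices is exactly one component. Let $P$ be the associated permutation matrix. Then $P$ is real and orthogonal, and conjugation $A_{i} \mapsto P^{T} A_{i} P = P^{-1} A_{i} P$ is exactly the simultaneous symmetric reordering of rows and columns corresponding to $\sigma$; in particular it preserves hermiticity of each $A_{i}$.

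Finally I would check that every $P^{T} A_{i} P$ is block diagonal with blocks of sizes $|C_{1}|, \dots, |C_{r}|$. Since there is no edge of $B(A)$ joining two distinct components, for every $A_{i}$ and every pair of indices $k, l$ lying in different components we have $(A_{i})_{kl} = 0$; after reordering by $\sigma$, all entries outside the diagonal blocks therefore vanish for every $A_{i}$ simultaneously. This exhibits the desired simultaneous block diagonalization by the single orthogonal matrix $P$.

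The only point I would be careful to state explicitly is the first step: hermiticity makes $B(A)$ symmetric and thereby rules out the "connected but not strongly connected" alternative (Case 2), so that the purely combinatorial relabeling of Case 1 always applies and nothing beyond a permutation (hence an orthogonal transformation) is required. Once that is observed, the remainder is just the bookkeeping already carried out in the proof of Theorem \ref{not_gen_full_thm}, so I do not expect any genuine obstacle.
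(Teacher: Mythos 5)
Your proposal is correct and follows essentially the same route as the paper's proof: hermiticity makes every edge of $B(A)$ bidirectional, so failure of strong connectivity forces actual disconnectedness, and then the permutation matrix from Case 1 of Theorem \ref{not_gen_full_thm} provides the orthogonal transformation. Your write-up is somewhat more explicit about the permutation matrix $P$ being orthogonal and preserving hermiticity, but the argument is the same.
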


\begin{proof}
If every matrix in $A$ is hermitian, then every element $A_{i,jk}$  of each matrix $A_{i}$ contributes the same edge to $B(A)$ as does $A_{i,kj}$, but with the direction reversed. Therefore in this case $B(A)$ will partition into strongly connected components, because there will be no one-way edges. This means that, for hermitian $A$, if $B(A)$ as a whole is not strongly connected, it must be disconnected, and as Case 1 in the proof of Theorem \ref{not_gen_full_thm}, a basis exists which simultaneously block-diagonalises every $A_{i}$. We can make the disconnected nature of $B(A)$ visible with an orthogonal transformation. 
\end{proof}

\section{Burnside graphs and their associated algebras}\label{algebras}

Laffey \cite{laffey_struct} shows for a pair of matrices, one of which is  diagonal with distinct eigenvalues, a strongly connected Burnside graph is all that is needed to guarantee the generation of the full matrix algebra. Without this distinct eigenvalue assumption, additional conditions are necessary. In this section, we provide a set of conditions on the submatrix blocks which guarantee generation of the full algebra. Of particular interest is the case where a pair of matrices have eigenvalues all of multiplicity 2. This is related to Kippenhahn's conjecture \cite{kip}, to which we will construct a family of counterexamples in Section 4. 
 
First we will need some definitions, which will allow us to prove an expanded version of Theorem \ref{2_gens_thm_simple} given in the introduction, which ensures that the algebra $\mathcal{A}$ generated by a set of matrices $A$ is the full matrix algebra. \begin{defn}[$p$-block]\label{p-block}
Let $p \in \mathbb{N}$, and let $H$ be a real symmetric matrix of size $pn \times pn$. A \textbf{p-block} of $H$ is a $p \times p$ submatrix occupying columns $p(i-1) + 1 $ to $pi $ and rows $p(j-1) + 1 $ to $pj $ for some $i,j = 1,...,n$. We denote such a $p$-block by $H_{ij}$ and say that it is in the $ij$-position in $H$.
\end{defn} 

\begin{defn}[$p$-word]\label{p-word}
Given a set $\lbrace H^{(1)},...,H^{(m)} \rbrace $ of $pn \times pn$ matrices over a field $F$, a \textbf{p-word} based at i$_{1}$ and ending at i$_{q}$ is a matrix product $$H^{(k_{1})}_{i_{1}j_{1}}H^{(k_{2})}_{j_{1}i_{1}}....H^{(k_{q})}_{j_{q-1}i_{q}},$$ where the second index of each entry matches the first index of the subsequent entry. For example $$ H^{(1)}_{12}H^{(1)}_{24}H^{(2)}_{43}H^{(3)}_{33} $$ is a $p$-word based at 1 and ending at 3 over the matrices $\lbrace H^{(1)}, H^{(2)},H^{(3)} \rbrace$.
\end{defn}

\begin{defn}[Condition Mult$_{p}$]\label{mult-q}
Let $p \in \mathbb{N}$. A hermitian matrix $K$ satisfies \textbf{Condition Mult$_{p}$} if its eigenvalues have maximum multiplicity $p$. For example, if $K$ has eigenvalues $\left \lbrace 1, -1, 2, 3, 4, 4 \right \rbrace $, then $K$ satisfies Condition Mult$_{2}$.
\end{defn} 

\begin{defn}[Condition $L-p$]\label{L-p}
Let $p \in \mathbb{N}$, and let $H$ be a hermitian matrix of size $pn \times pn$. Take a partition of $n$ as $(l_{1} = 1,l_{2} = 1, l_{3}, l_{4}....,l_{m})$ so that $\sum_{i = 1}^{m} l_{i} = n $. 

Suppose also that each $l_{j} = \sum_{i = 1}^{k} l_{i}$ for some $k$, with $k < j$. For example, $(1, 1, 2, 4, 8)$ or $(1, 1, 1, 1)$ or $(1, 1, 2, 2)$.

Identify \textbf{square} blocks along the top row of $H$, of non-decreasing size $pl_{1} = p, pl_{2} = p, pl_{3}, pl_{4}...,pl_{m}$, starting at the top left and proceeding along to the right. Note that the importance of $p$ is that it sets the minimum size of the smallest pair of blocks with which the block sequence starts.

If such a partition exists so that each of these blocks is invertible, we say that $H$ satisfies \textbf{condition $L-p$}. 

\end{defn} An example to illustrate these partitions:

\begin{example}
Consider the matrix $$ H = \left( \begin{array}{cc|cc|cccc|cccc}
0 & 1 & 1 & 1     & 1 & 0 & 1 & 0 & 1 & 1 & 1 & 0 \\
1 & 0 & 0 & 1     & 0 & 1 & 1 & 1 & 0 & 1 & 0 & 1 \\
\, & \, & \, & \, & 0 & 1 & 1 & 0 & 0 & 0 & 1 & 1 \\
\, & \, & \, & \, & 1 & 0 & 0 & 1 & 1 & 1 & 0 & 1 \\
\, & \vdots & \, & \vdots & \, & \vdots & \, & \, & \, & \vdots & \, & \, \\
\end{array} \right). $$ Only some entries of $H$ are shown for clarity. Let $p = 2$, and take the partition $(1, 1, 2, 2)$ of 6. Then $H$ satisfies condition $L-2$. We could also have taken the partition $(1, 1, 1, 1, 1, 1)$ and $H$ would still have satisfied Condition $L-2$.
\end{example} In this example, notice how we could have used two different partitions of 6. The point is that the condition requires only the existence of a suitable partition. It does not refer to a specific partition.

We will now we can present the full version of Theorem \ref{2_gens_thm_simple}.

\begin{theorem}[Even-order constructibility]\label{2_gens_thm}
Let $H$ and $K$ be $2n \times 2n$ hermitian matrices over $\mathbb{C}$. Then if, in the basis in which $K$ is diagonal with weakly ascending diagonal entries, \begin{enumerate}

\item $K$ satisfies Condition Mult$_{2}$,

\item $H$ satisfies Condition $L-2$,
\item there exist distinct 2-words $w_{1} $ and $w_{2} $ both based at 1 so that $w_{1}w_{1}^{T}$ and $w_{2}w_{2}^{T}$ do not commute,
\end{enumerate} then the algebra $\mathcal{A}$ generated by $H$ and $K$ is the full algebra $M_{2n}(\mathbb{C})$. \\
\end{theorem}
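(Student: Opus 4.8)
The plan is to prove directly that $\mathcal{A}$ contains every matrix unit $E_{ab}$, built up in three stages: extract the $2\times 2$ block structure of the ambient space from powers of $K$, use hypothesis (3) to install a full copy of $M_2(\mathbb{C})$ on the top-left block, and then propagate this outward along the staircase of invertible blocks furnished by Condition $L-2$. For the first stage I would work in the given basis, so $K$ is diagonal with weakly ascending entries, and treat the substantive case in which every eigenvalue of $K$ has multiplicity exactly $2$ (if some eigenvalue is simple its spectral projection is already a matrix unit, which only helps). Then Condition Mult$_2$ makes each eigenspace of $K$ one of the positional $2\times 2$ blocks, the $i$-th block spanning coordinates $2i-1,2i$; since the eigenvalues on distinct blocks differ, Lagrange interpolation gives a polynomial $p_i$ with $p_i(K)=P_i$, the projection onto the $i$-th block, so every $P_i$ and every sum of them lies in $\mathcal{A}$. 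Hence $P_i H P_j\in\mathcal{A}$, which is exactly the $2$-block $H_{ij}$ placed in the $ij$-position; and more generally, for a $2$-word $w=H_{1 j_1}H_{j_1 j_2}\cdots H_{j_{q-1}\ell}$ based at $1$, the element $P_1 H P_{j_1} H P_{j_2}\cdots H P_{\ell}\in\mathcal{A}$ is $w$ itself, viewed as a $2\times 2$ matrix in the $1\ell$-position.

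The second and most delicate stage is to show $\mathcal{A}\supseteq\mathrm{End}(V_1)$, the full $2\times 2$ algebra on the first block $V_1=\mathrm{span}(e_1,e_2)$. Let $w_1,w_2$ be the $2$-words of hypothesis (3), based at $1$ and ending at nodes $a,b$. By the previous paragraph $w_1,w_2\in\mathcal{A}$; and since $H$ is hermitian, reversing a word and replacing each block by its conjugate transpose again yields a $2$-word, so $w_1^{\ast},w_2^{\ast}\in\mathcal{A}$. Thus $w_1w_1^{\ast}$ and $w_2w_2^{\ast}$ (which coincide with $w_1w_1^{T}$ and $w_2w_2^{T}$ when $H$ is real, the case relevant to the application) are hermitian elements of $\mathcal{A}\cap\mathrm{End}(V_1)$ that do not commute. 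Two non-commuting hermitian $2\times 2$ matrices have no common eigenvector, since its orthogonal complement would also be invariant, forcing simultaneous diagonalisation and hence commutation; so, adjoining $P_1=I_{V_1}$, the algebra they generate has no proper invariant subspace of $V_1$ and by Theorem \ref{burnside} equals $\mathrm{End}(V_1)$. In particular $E_{11},E_{12},E_{21},E_{22}\in\mathcal{A}$.

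For the last stage, let $(l_1=1,l_2=1,l_3,\dots,l_m)$ be a partition witnessing Condition $L-2$, and for $0\le t\le m$ let $W_t$ be the span of the first $2(l_1+\cdots+l_t)$ coordinates, so $W_1=V_1$ and $W_m=\mathbb{C}^{2n}$. I would prove $\mathrm{End}(W_t)\subseteq\mathcal{A}$ by induction on $t$, the base $t=1$ being the previous stage. For the inductive step, the self-similarity clause of Condition $L-2$ supplies $k<t$ with $l_t=l_1+\cdots+l_k$, so $2l_t=\dim W_k$ and the $t$-th staircase block of $H$ is an invertible element $\beta\in\mathrm{Hom}(C_t,W_k)\cap\mathcal{A}$, where $C_t$ is the span of the $2l_t$ coordinates of $W_t$ not in $W_{t-1}$; its conjugate transpose $\beta^{\ast}$ lies in $\mathcal{A}$ as well. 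Composing $\beta$ and $\beta^{\ast}$ with $\mathrm{End}(W_k)\subseteq\mathcal{A}$ (valid since $k\le t-1$) on the level of matrix units shows successively that $\mathrm{Hom}(C_t,W_k)$, $\mathrm{Hom}(W_k,C_t)$ and $\mathrm{End}(C_t)$ all lie in $\mathcal{A}$; bridging through $\mathrm{End}(W_{t-1})\subseteq\mathcal{A}$ upgrades the Hom-pieces to $\mathrm{Hom}(C_t,W_{t-1})$ and $\mathrm{Hom}(W_{t-1},C_t)$, and assembling the four pieces gives $\mathrm{End}(W_{t-1}\oplus C_t)=\mathrm{End}(W_t)\subseteq\mathcal{A}$. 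Taking $t=m$ yields $\mathcal{A}=M_{2n}(\mathbb{C})$.

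The conceptual crux is the second stage: turning hypothesis (3) into two non-commuting hermitian elements of $\mathrm{End}(V_1)$ and invoking irreducibility in dimension $2$. The staircase induction, though the longest part, is routine matrix-unit bookkeeping once one observes that the partition's self-similarity is precisely what keeps the row-extent of each new staircase block equal to the dimension of an already-conquered coordinate subspace; and the first stage is where Condition Mult$_2$ does its work, tying the positional $2$-block decomposition to the spectral decomposition of $K$.
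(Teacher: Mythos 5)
Your proposal is correct and follows essentially the same route as the paper's proof: isolate the positional $2\times 2$ blocks of $H$ by sandwiching with interpolation polynomials in $K$, use hypothesis (3) to obtain a copy of $M_{2}(\mathbb{C})$ in the top-left position, and propagate it through the invertible staircase blocks supplied by Condition $L-2$. If anything, your write-up is tighter in two spots where the paper is terse -- the irreducibility argument for two non-commuting hermitian $2\times 2$ matrices, and the explicit induction over the partition -- and you rightly note that for complex hermitian $H$ the reversed word carries the conjugate transpose rather than the plain transpose.
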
 

\begin{proof}

Since $K$ satisfies Condition Mult$_{2}$, we can find an orthogonal transformation to put $K$ in the form $$K = \left( \begin{array}{cccc}
k_{1} & \, & \, & \, \\
\, & k_{2} & \, & \, \\
\, & \, & \ddots & \, \\
\, & \, & \, & k_{2n}
\end{array} \right), $$ where each of the $k_{i}$ appear with multiplicity at most 2, and there is at least one such pair. With a symmetric permuation of rows and columns, we can arrange for all such pairs eigenvalues to be consecutive.

Since the diagonal entries $\lbrace k_{1},..., k_{2n} \rbrace$ of $K$ now all lie along the diagonal and all have multiplicity at most 2, with there being at least one such pair which must be consecutive (due to the ascending order assumption), we can define $n$ distinct polynomials $\lbrace q_{1},..., q_{n} \rbrace$ of order $2n-1$ such that  $$q_{i}(K) = \left( \begin{array}{cccc}
0_{2} & 0_{2} & \hdots & 0_{2} \\
\vdots & \ddots & \hdots & \vdots \\
\vdots & \, & I_{2} & \vdots \\
0_{2} & \hdots & \hdots & \ddots
\end{array} \right) \in \mathcal{A}, $$ that is, for every $i = 1,...,n$, $q_{i}(K)$ has $I_{2}$ at the $(i,i)$ 2-block position and zeroes elsewhere and such matrices are elements of the algebra $\mathcal{A}$.

Now consider $q_{i}(K) \, H \, q_{j}(K)$. Conjugation in this way produces a matrix of the form $$\left( \begin{array}{cccc}
0_{2} & 0_{2} & \hdots & 0_{2} \\
\vdots & \ddots & H_{ij} & \vdots \\
\vdots & \, & \ddots & \vdots \\
0_{2} & \hdots & \hdots & \ddots
\end{array} \right) \in \mathcal{A}, $$ a matrix which consists of only the $(i,j)$ 2-block of $H$ at the $(i,j)$ location, with zeros elsewhere. Since we are conjugating by elements of $\mathcal{A}$, these isolated 2-blocks are themselves elements of $\mathcal{A}$. The 2-words $w_{1}$ and $w_{2}$ can therefore be obtained by isolating each 2-block in each word and multiplying the word out. Since $H$ is symmetric, $w^{T}_{1}$ and $w^{T}_{2}$ are also both valid 2-words. Both $w_{1}w^{T}_{1}$ and $w_{2}w^{T}_{2}$ are symmetric, and moreover lie in the $(1,1)$ position. We have assumed that $w_{1}w^{T}_{1}$ and $w_{2}w^{T}_{2}$ do not commute, and so by considering available dimensions we can see that $w_{1}w^{T}_{1}$ and $w_{2}w^{T}_{2}$ generate $M_{2}(\mathbb{C})$. Therefore, $$ \left( \begin{array}{ccc}
M_{2}(F) & \hdots & \hdots \\
\vdots & \, & \,  \\
\vdots & \, & \, 
\end{array} \right) \subseteq \mathcal{A}. $$ Now we can use Condition $L-2$ of $H$.

Condition $L-2$ requires that $H_{12}$ is invertible and  therefore so is $H_{21}$ (since $H$ is hermitian), and so we can move the copy of $M_{2}(F)$ around by multiplying with $$\left( \begin{array}{cccc}
0_{2} & H_{12} & \, & \, \\
0_{2} & 0_{2} & \, & \, \\
\vdots & \, & \ddots & \, \\
\, & \, & \, & \ddots
\end{array} \right), \left( \begin{array}{cccc}
0_{2} & 0_{2} & \, & \, \\
H_{21} & 0_{2} & \, & \, \\
\vdots & \, & \ddots & \, \\
\, & \, & \, & \ddots
\end{array} \right) \in \mathcal{A} $$ as follows: $$ \left( \begin{array}{ccc}
M_{2}(\mathbb{C}) & \hdots & \hdots \\
\vdots & \, & \,  \\
\vdots & \, & \, 
\end{array} \right) \left( \begin{array}{cccc}
0_{2} & H_{12} & \, & \, \\
0_{2} & 0_{2} & \, & \, \\
\vdots & \, & \ddots & \, \\
\, & \, & \, & \ddots
\end{array} \right) = \left( \begin{array}{ccc}
0_{2} & M_{2}(\mathbb{C}) & \hdots \\
\vdots & \, & \,  \\
\vdots & \, & \, 
\end{array} \right), $$ $$ \left( \begin{array}{cccc}
0_{2} & 0_{2} & \, & \, \\
H_{21} & 0_{2} & \, & \, \\
\vdots & \, & \ddots & \, \\
\, & \, & \, & \ddots
\end{array} \right) \left( \begin{array}{ccc}
M_{2}(\mathbb{C}) & \hdots & \hdots \\
\vdots & \, & \,  \\
\vdots & \, & \, 
\end{array} \right)  = \left( \begin{array}{ccc}
0_{2} & \hdots & \hdots \\
M_{2}(\mathbb{C}) & \, & \,  \\
\vdots & \, & \, 
\end{array} \right), $$ \begin{align*} \left( \begin{array}{cccc}
0_{2} & 0_{2} & \, & \, \\
H_{21} & 0_{2} & \, & \, \\
\vdots & \, & \ddots & \, \\
\, & \, & \, & \ddots
\end{array} \right) &\left( \begin{array}{ccc}
M_{2}(\mathbb{C}) & \hdots & \hdots \\
\vdots & \, & \,  \\
\vdots & \, & \, 
\end{array} \right) \left( \begin{array}{cccc}
0_{2} & H_{12} & \, & \, \\
0_{2} & 0_{2} & \, & \, \\
\vdots & \, & \ddots & \, \\
\, & \, & \, & \ddots
\end{array} \right) \\ &= \left( \begin{array}{ccc}
0_{2} & 0_{2} & \hdots \\
0_{2} & M_{2}(\mathbb{C}) & \,  \\
\vdots & \, & \, 
\end{array} \right). \end{align*} Since $H_{12}$ and its transpose $H_{21}$ is invertible, each of these products is equal to $M_{2}(\mathbb{C})$. Thus there are copies of $M_{2}(\mathbb{C})$ in the $11$, $12$, $21$, and $22$ positions. Therefore every possible $4 \times 4$ matrix over $F$ exists in the top-left corner, since every such matrix can be partitioned into 4 blocks which are elements of $M_{2}(\mathbb{C})$. Therefore $$ \left( \begin{array}{ccc}
M_{4}(\mathbb{C}) & \hdots & \hdots \\
\vdots & \, & \,  \\
\vdots & \, & \, 
\end{array} \right) \subseteq \mathcal{A}. $$ Condition $L-2$ ensures that we can simply keep repeating the process. If the next invertible block is of size 1, we repeat the process on the copy $M_{2}(\mathbb{C})$ which lies in the top left. If on the other hand it is of size 4, we use our new copy of $M_{4}(\mathbb{C})$. The entire matrix is filled out in this way with copies of each full algebra of lesser order.

Therefore $\mathcal{A} = M_{2n}(\mathbb{C})$.
\end{proof}

\begin{example}
Let $C_{1} = \left( \begin{array}{cccc}
0 & 1 & 0 & 1 \\
1 & 0 & 2 & 0 \\
0 & 2 & 0 & 0 \\
1 & 0 & 0 & 0
\end{array} \right) $ and $C_{2} = \left( \begin{array}{cccc}
1 & 0 & 0 & 0 \\
0 & 1 & 0 & 0 \\
0 & 0 & 0 & 0 \\
0 & 0 & 0 & 0
\end{array} \right).$ \\ \\ Let $\mathcal{C}$ be the algebra generated by $C_{1}$ and $C_{2}$. The Burnside graph $B(\lbrace C_{1}, C_{2} \rbrace )$ is as follows: \begin{center}
\includegraphics[scale=.20]{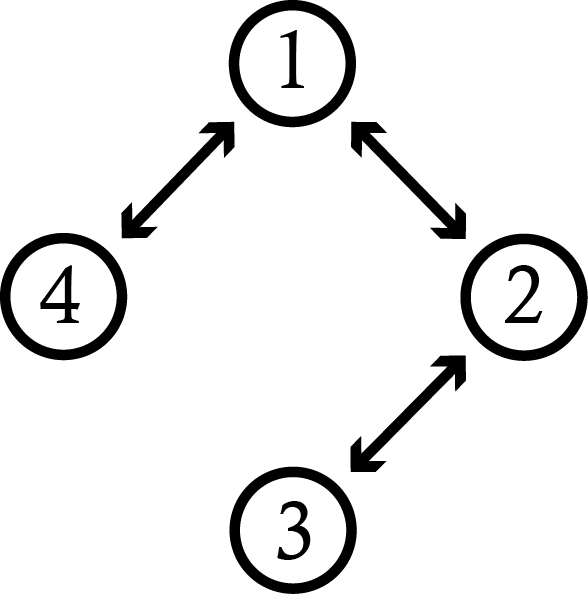}
\end{center}
We can check off the requirements of Theorem \ref{2_gens_thm} one by one: \begin{enumerate}
\item $C_{2}$ satsifies condition Mult$_{2}$,
\item $\left( \begin{array}{cc}
0 & 1 \\
1 & 0
\end{array} \right) $ and $\left( \begin{array}{cc}
0 & 1 \\
2 & 0
\end{array} \right)$ are both invertible, and so $C_{1}$ satisfies condition L-2,
\item $\left( \begin{array}{cc}
0 & 1 \\
1 & 0
\end{array} \right) $ and $\left( \begin{array}{cc}
0 & 1 \\
2 & 0
\end{array} \right) \left( \begin{array}{cc}
0 & 2 \\
1 & 0
\end{array} \right) =  \left( \begin{array}{cc}
1 & 0 \\
0 & 4
\end{array} \right)$ do not commute.

\end{enumerate} Therefore by Theorem \ref{2_gens_thm}, $\mathcal{C} = M_{4}(\mathbb{C})$.
\end{example}

The key element of Theorem \ref{2_gens_thm} is the non-commutativity requirement. This is what sets the process going by giving us a sub-algebra to start with. We can obtain a few more corollaries using the following theorem of Laffey:

\begin{theorem}[Laffey's Generation Theorem \cite{laffey_struct}]\label{laffey_gen_thm}
Let $A = \left\lbrace A_{1},...,A_{k}\right\rbrace$ be a set of $n \times n$ matrices over a field $F$. Let $B$ be an $n \times n$ diagonal matrix over a field $F$ with distinct diagonal entries. Let $\mathcal{\tilde{A}}$ be the algebra generated by $A \cup \lbrace B \rbrace$. Then the following statements are equivalent for $n > 1$: \begin{enumerate}
\item $\mathcal{\tilde{A}}$ is simple,
\item $\mathcal{\tilde{A}} = M_{n}(F)$,
\item $B(A)$ is strongly connected.
\end{enumerate}
\end{theorem}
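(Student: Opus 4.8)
The plan is to establish $(2)\Leftrightarrow(3)$ directly, $(2)\Rightarrow(1)$ for free, and the contrapositive $\neg(3)\Rightarrow\neg(1)$; together with $(3)\Rightarrow(2)\Rightarrow(1)$ this closes the loop, so all three statements become equivalent. The implication $(2)\Rightarrow(1)$ needs no argument, as the full matrix algebra $M_n(F)$ over a field is simple.

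The engine of the whole proof is the observation that, since $B$ is diagonal with \emph{distinct} diagonal entries $b_1,\dots,b_n\in F$, Lagrange interpolation produces polynomials $p_i(x)=\prod_{j\neq i}(x-b_j)/(b_i-b_j)$ with $p_i(b_j)=\delta_{ij}$, so each diagonal matrix unit $E_{ii}=p_i(B)$ lies in $\tilde{\mathcal{A}}$ (distinctness alone makes the denominators invertible, so no characteristic hypothesis is needed). Then $E_{ii}A_mE_{jj}=(A_m)_{ij}E_{ij}$, so $E_{ij}\in\tilde{\mathcal{A}}$ whenever $(i,j)$ is an edge of $B(A)$; and since each $A_m$ is an $F$-linear combination of the $E_{ii}$ and of the $E_{ij}$ with $(i,j)$ an edge, $\tilde{\mathcal{A}}$ is generated as an $F$-algebra by $\{E_{ii}\}$ together with $\{E_{ij}:(i,j)\text{ an edge of }B(A)\}$. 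Multiplying matrix units chains them along directed walks, and conversely every $E_{ij}$ with $j$ reachable from $i$ is a product of edge-units along a path, so one gets the clean description $\tilde{\mathcal{A}}=\operatorname{span}_F\{E_{ij}:j\text{ is reachable from }i\text{ in }B(A)\}$, where every vertex is reachable from itself via the trivial path. From this, $(3)\Leftrightarrow(2)$ is immediate: for $n>1$, $B(A)$ is strongly connected exactly when every ordered pair $(i,j)$ is joined by a directed path, i.e.\ exactly when every $E_{ij}\in\tilde{\mathcal{A}}$, i.e.\ $\tilde{\mathcal{A}}=M_n(F)$.

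For $(1)\Rightarrow(3)$ I will argue by contraposition. If $B(A)$ is not strongly connected then there is a vertex $v$ whose reachable set $R(v)=\{j:j\text{ reachable from }v\}$ is a proper subset of $\{1,\dots,n\}$ (were every $R(v)$ the whole vertex set, every ordered pair would be path-joined and, as $n>1$, $B(A)$ would be strongly connected). Note $R(v)$ is closed under reachability. Put $I=\operatorname{span}_F\{E_{ij}\in\tilde{\mathcal{A}}:j\in R(v)\}$; using $E_{kl}E_{ij}=\delta_{li}E_{kj}$ and $E_{ij}E_{kl}=\delta_{jk}E_{il}$ together with transitivity of reachability and the reachability-closure of $R(v)$, one checks that $I$ is a two-sided ideal. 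It is nonzero, since $E_{vv}\in I$, and proper, since $E_{jj}\in\tilde{\mathcal{A}}\setminus I$ for any $j\notin R(v)$. Hence $\tilde{\mathcal{A}}$ is not simple.

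The main obstacle I anticipate is not a computation but a pitfall in this last step: it is tempting to deduce non-simplicity straight from the common proper invariant subspace furnished by (the proof of) Theorem \ref{not_gen_full_thm}, but a simple algebra can perfectly well have proper nonzero invariant subspaces, so that shortcut does not work. It is precisely the presence of $B$ — forcing all the $E_{ii}$ into $\tilde{\mathcal{A}}$ and thereby pinning $\tilde{\mathcal{A}}$ down as a span of matrix units indexed by the reachability relation of $B(A)$ — that makes the reachability-closed set $R(v)$ yield a genuine two-sided ideal, and the one place demanding care is verifying the ideal axioms from the matrix-unit multiplication rules.
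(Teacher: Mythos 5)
This theorem is imported from Laffey's paper \cite{laffey_struct}; the present paper gives no proof of it, only the citation. Your proposal is therefore not comparable to an in-paper argument, but it is a correct, self-contained proof, and it is essentially the standard one. The reduction of $\tilde{\mathcal{A}}$ to a span of matrix units indexed by the reachability relation is sound: Lagrange interpolation on the distinct diagonal entries yields the $E_{ii}$, the identity $E_{ii}A_mE_{jj}=(A_m)_{ij}E_{ij}$ yields the edge units, and transitivity of reachability shows the resulting span is multiplicatively closed, which gives $(2)\Leftrightarrow(3)$ cleanly. Your handling of $(1)\Rightarrow(3)$ is the part most worth having written out, and you did it correctly: the reachability-closed set $R(v)$ does produce a genuine two-sided ideal, and your warning that a common invariant subspace alone would not suffice (a simple algebra can be reducible) is exactly the right pitfall to flag; note that this is also why the paper's Theorem \ref{not_gen_full_thm} only concludes $\mathcal{A}\neq M_n(\mathbb{C})$ and not non-simplicity. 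The only loose end is a convention you share with the paper itself (cf.\ the polynomials $q_i(K)$ in the proof of Theorem \ref{2_gens_thm}): forming $p_i(B)$ for a polynomial with nonzero constant term presupposes that $\tilde{\mathcal{A}}$ is unital, or else requires replacing $p_i$ by $x\,p_i$ and treating a possible zero eigenvalue separately. This is worth a sentence but is not a gap in substance.
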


Laffey's theorem  helps us establish the following theorem:

\begin{theorem}[q-block constructibility]\label{q_gens_thm_ver_2}

Let $H$ and $K$ be $qn \times qn$ hermitian matrices over $\mathbb{C}$. Then if, in the basis in which $K$ is diagonal with weakly ascending diagonal entries, \begin{enumerate}
\item There is a set $\left\lbrace v_{1},...,v_{k} \right\rbrace$ of $q$-words starting and ending at 1 such that the Burnside graph $B( \left\lbrace v_{1},..., v_{k} \right\rbrace )$ is strongly connected,
\item $K$ satisfies Condition Mult$_{q}$,
\item $H$ satisfies Condition $L-q$,
\item there is some $q$-word $w$ based at 1 so that $w w^{T}$ is diagonal and has $q$ unique eigenvalues,
\end{enumerate} then the algebra $\mathcal{A}$ generated by $H$ and $K$ is the full algebra $M_{2n}(\mathbb{C})$. 
\end{theorem}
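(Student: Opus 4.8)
The plan is to mirror the proof of Theorem~\ref{2_gens_thm}, changing only the single step that there produces a copy of $M_q(\mathbb{C})$ in the top-left $q$-block of $\mathcal{A}$: in Theorem~\ref{2_gens_thm} that copy comes from the non-commutativity hypothesis, whereas here it will come from Laffey's Generation Theorem (Theorem~\ref{laffey_gen_thm}) fed by hypotheses (1) and (4). We take $q \geq 2$, so that Laffey's theorem is available for $M_q$. First I would reproduce the $q$-block machinery of Theorem~\ref{2_gens_thm}. Writing $K = \mathrm{diag}(k_1,\dots,k_{qn})$ in the given basis, Condition Mult$_q$ together with a symmetric permutation of basis vectors lets one group the diagonal of $K$ into $n$ consecutive $q$-blocks none of whose eigenvalues is repeated in another block, exactly as in the proof of Theorem~\ref{2_gens_thm}, so there are polynomials $q_1,\dots,q_n$ with $q_i(K) \in \mathcal{A}$ equal to $I_q$ in the $(i,i)$ $q$-block position and zero elsewhere. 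Then for $M \in \{H,K\}$ the product $q_i(K)\,M\,q_j(K) \in \mathcal{A}$ is supported on the single $q$-block $(i,j)$, where it equals $M_{ij}$, and multiplying such isolated blocks along an index sequence shows that every $q$-word over $\{H,K\}$, placed as a $qn \times qn$ matrix on the block position fixed by its start and end indices, lies in $\mathcal{A}$. In particular the $q$-words $v_1,\dots,v_k$ of hypothesis (1) are realised inside $\mathcal{A}$ supported on the $(1,1)$ block, and, since $H$ is hermitian so that $w^{T}$ is again a legitimate $q$-word, so is $ww^{T}$.

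The key step is to upgrade these corner elements to all of $M_q(\mathbb{C})$ in the $(1,1)$ position. The map $M_q(\mathbb{C}) \hookrightarrow M_{qn}(\mathbb{C})$ placing a matrix in the $(1,1)$ $q$-block and zeros elsewhere is a (non-unital) ring homomorphism, so the subalgebra of $\mathcal{A}$ generated by the corner copies of $v_1,\dots,v_k$ and $ww^{T}$ is exactly the corner copy of the subalgebra of $M_q(\mathbb{C})$ that $v_1,\dots,v_k$ and $ww^{T}$ generate there. By hypothesis (4) the matrix $ww^{T}$ is diagonal with $q$ distinct diagonal entries, and by hypothesis (1) the Burnside graph $B(\{v_1,\dots,v_k\})$ is strongly connected, so Laffey's Generation Theorem, applied with $A = \{v_1,\dots,v_k\}$ and $B = ww^{T}$, shows that this subalgebra of $M_q(\mathbb{C})$ is all of $M_q(\mathbb{C})$. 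Hence $\mathcal{A}$ contains every matrix supported on the $(1,1)$ $q$-block.

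It then remains to run the bootstrapping argument of Theorem~\ref{2_gens_thm} unchanged. Condition $L-q$ furnishes a partition $(l_1 = 1,\, l_2 = 1,\, l_3, \dots, l_m)$ of $n$ and, for each $j \geq 2$, an invertible $q l_j \times q l_j$ block of $H$ lying along the top band of rows, whose $q l_j$ rows are the first $l_1 + \dots + l_k$ $q$-blocks for some $k < j$ (this being the requirement $l_j = \sum_{i \leq k} l_i$) and whose columns are the next $l_j$ $q$-blocks; its transpose is invertible too, $H$ being hermitian, and both of these blocks, isolated in their positions, lie in $\mathcal{A}$ by the machinery of the first step. Starting from $M_q(\mathbb{C})$ in the $(1,1)$ position and multiplying on the left and right by these invertible blocks and their transposes, while using also the cross-block entries already present in the growing corner algebra, one fills in the full $q(l_1 + \dots + l_j) \times q(l_1 + \dots + l_j)$ algebra in the top-left corner; inducting on $j$ up to $j = m$ yields $\mathcal{A} = M_{qn}(\mathbb{C})$.

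The two laborious points are the bookkeeping in the $q$-block realisation of the first paragraph and the verification that the chain of invertible blocks from Condition $L-q$ really does propagate the corner algebra through the entire matrix; both are inherited essentially verbatim from the proof of Theorem~\ref{2_gens_thm}. The genuinely new content, and the only place hypotheses (1) and (4) enter, is the corner-generation step, and there, once one notices that the corner embedding is a ring homomorphism, Laffey's Generation Theorem does all the work. I therefore expect the main obstacle to be organisational rather than deep: matching the $q$-words of hypothesis (1) and the diagonal matrix $ww^{T}$ of hypothesis (4) correctly to the data of Laffey's theorem, and confirming that Condition $L-q$ supplies exactly the invertible blocks the bootstrap consumes.
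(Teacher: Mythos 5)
Your proposal is correct and follows essentially the same route as the paper: isolate $q$-blocks via polynomials in $K$, realise the $q$-words $v_1,\dots,v_k$ and $ww^{T}$ in the $(1,1)$ corner, invoke Laffey's Generation Theorem with $A=\{v_1,\dots,v_k\}$ and $B=ww^{T}$ to obtain $M_q(\mathbb{C})$ there, and then bootstrap with Condition $L$-$q$ exactly as in Theorem \ref{2_gens_thm}. Your explicit remark that the corner embedding is a (non-unital) ring homomorphism, and your correct reading of the conclusion as $M_{qn}(\mathbb{C})$, are small clarifications of the paper's argument rather than a different approach.
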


\begin{proof}
Since $H$ is hermitian, $w w^{T}$  will also be hermitian and can therefore be diagonalised. Diagonalising $w w^{T}$ can be done by conjugating $H$ and $K$ with the Kronecker product $P \otimes I_{n}$, where $P$ is the unitary diagonalisation matrix of $w w^{T}$. Because $K$ itself is a Kronecker product $I_{q} \otimes D$, where $D$ is some diagonal matrix, it will not be affected. We have assumed that $w w^{T}$ has unique eigenvalues - denote them by $w_{1},...,w_{q}$. Use the same block-isolation process as in the proof of Theorem \ref{2_gens_thm}, to obtain that $$ \left( \begin{array}{ccc|ccc}
w_{1} & \, & \, & \, & \, & \, \\
\, & \ddots & \, & \, & \, & \,\\
\,  & \, & w_{q} & \, & \, & \,\\
\hline  & \, & \, & \, & \, & \,\\
\,  & \, & \, & \, & \, & \, \\
\end{array} \right) \in \mathcal{A}, \left( \begin{array}{c|ccc}
v_{j} & \, & \, & \, \\
\hline  & \, & \, & \, \\
\,  & \, & \, & \, \\
\end{array} \right) \in \mathcal{A},$$ for each $j=1,...,k$. Since we have assumed that $B(\left\lbrace v_{1},...,v_{k} \right\rbrace )$ is strongly connected, Laffey's Generation Theorem gives us a copy of $M_{q}(\mathbb{C})$ in the top left corner: $$ \left( \begin{array}{ccc}
M_{q}(\mathbb{C}) & \hdots & \hdots \\
\vdots & \, & \,  \\
\vdots & \, & \, 
\end{array} \right) \subseteq \mathcal{A}. $$ The rest of the proof follows as in Theorem \ref{2_gens_thm}, using Condition $L-q$ to distribute copies of $M_{q}(\mathbb{C})$ all around the remaining positions.

\end{proof}

Theorem \ref{not_gen_full_thm} actually gives a condition that is in a generic sense necessary and sufficient. Recall the notion of `generic' from algebraic geometry: a property holds generically if it holds except on a proper Zariski-closed subset.

\begin{cor} 
Let $A = \{A_1 , \ldots, A_k\}$, where $k > 1$, be a set
of $n \times n$ matrices over $\mathbb{C}$ with strongly connected Burnside graph $B(A)$. Let $\mathcal A$ denote the algebra generated by $A$. If $A$ is a generic tuple, then $\mathcal{A} = M_n(\mathbb{C})$.
\end{cor}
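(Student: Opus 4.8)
The plan is to show that the \emph{bad locus} --- the set of tuples $A \in M_n(\mathbb{C})^k \cong \mathbb{C}^{kn^2}$ for which $\mathcal{A} \neq M_n(\mathbb{C})$ --- is a proper Zariski-closed subset. Since a generic tuple has $B(A)$ equal to the complete directed graph on $\{1,\dots,n\}$ (each off-diagonal entry of, say, $A_1$ is generically nonzero, and the complete graph is strongly connected for $n>1$; the case $n=1$ is trivial), the hypothesis on $B(A)$ is automatically satisfied generically, and the corollary will follow once the bad locus is known to be proper closed.

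The first step is to check that the bad locus is Zariski-closed. Here one uses the standard fact that $\mathcal{A}$ is spanned, as a vector space, by $I$ together with the products $A_{i_1}\cdots A_{i_\ell}$ with $\ell \le n^2$: writing $\mathcal{W}_\ell$ for the span of all such products of length $\le \ell$, the chain $\mathcal{W}_0 \subseteq \mathcal{W}_1 \subseteq \cdots$ strictly increases in dimension until it stabilizes, and once $\mathcal{W}_{\ell+1}=\mathcal{W}_\ell$ this subspace is closed under multiplication, hence equals $\mathcal{A}$; as $\dim \mathcal{A}\le n^2$ this happens by $\ell=n^2$. Enumerate these finitely many words as $w_1,\dots,w_N$ and let $M(A)$ be the $N\times n^2$ matrix whose $j$-th row is the vectorization of $w_j(A)$; its entries are polynomials in the entries of $A$. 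Then $\mathcal{A}=M_n(\mathbb{C})$ if and only if $M(A)$ has rank $n^2$, i.e.\ some $n^2\times n^2$ minor of $M(A)$ does not vanish. Hence the bad locus is the common zero set of these finitely many minors, which is Zariski-closed.

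The second step is to exhibit a single tuple outside the bad locus (so that it is proper). Take $A_1^{*}=\mathrm{diag}(1,2,\dots,n)$, take $A_2^{*}$ to be the cyclic permutation matrix realizing the directed cycle $1\to 2\to\cdots\to n\to 1$, and set $A_3^{*}=\cdots=A_k^{*}=0$ (using $k>1$). Then $B(\{A_2^{*},\dots,A_k^{*}\})$ is this $n$-cycle, which is strongly connected, so Laffey's Generation Theorem (Theorem \ref{laffey_gen_thm}), applied with the diagonal matrix $B=A_1^{*}$ and the set $\{A_2^{*},\dots,A_k^{*}\}$, gives that the algebra generated by $A_1^{*},\dots,A_k^{*}$ is $M_n(\mathbb{C})$ for $n>1$. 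Since moreover $B(A^{*})$ is strongly connected, the complement of the bad locus is a nonempty Zariski-open set meeting the set of tuples with strongly connected Burnside graph, which completes the argument.

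The only non-routine ingredient is the first step: making precise that "$\mathcal{A}=M_n(\mathbb{C})$'' is detected by the non-vanishing of a \emph{fixed, finite} list of polynomials in the entries of $A$, which hinges on the uniform word-length bound $n^2$ holding across all tuples simultaneously. The construction of the witness tuple is immediate from Laffey's theorem, and the passage to genericity is then purely formal.
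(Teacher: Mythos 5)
Your first step (Zariski-closedness of the bad locus via the rank of the matrix of vectorized words of length at most $n^2$) is exactly the paper's argument and is fine. The problem is in how you discharge the hypothesis on $B(A)$ and in the choice of witness. You read ``generic tuple'' as generic in all of $M_n(\mathbb{C})^k$, observe that such a tuple has complete Burnside graph, and then exhibit any full-generating tuple. Under that reading the corollary is true but vacuous: the hypothesis on $B(A)$ plays no role, and the statement no longer expresses what the paper intends, namely that Theorem \ref{not_gen_full_thm} is ``generically necessary and sufficient.'' The intended reading --- which the paper's proof makes explicit with the phrase ``it suffices to find a tuple $A$ with the \emph{given} graph $B(A)$'' --- is that one fixes a strongly connected digraph $G$ and works inside the linear subspace of tuples whose supports are contained in the edge set of $G$ (plus diagonals); the claim is that a generic tuple in \emph{that} stratum generates $M_n(\mathbb{C})$. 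The closedness argument restricts to this subspace without change, but now the witness must itself lie in the stratum.

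This is where your construction fails: you take $A_2^*$ to be the permutation matrix of the $n$-cycle, but a strongly connected digraph need not contain a Hamiltonian cycle (e.g.\ the ``doubled path'' $1\leftrightarrow 2\leftrightarrow 3\leftrightarrow 4$ is strongly connected but has no directed Hamiltonian cycle), so your witness generally has support outside the edge set of $G$ and proves nothing about the stratum in question. The repair is exactly the paper's choice: take $A_1^*$ diagonal with $n$ distinct entries and $A_2^*$ equal to the adjacency matrix of $G$ itself, so that $B(\{A_2^*\})=G$ is strongly connected and Theorem \ref{laffey_gen_thm} applies with $B=A_1^*$; this witness lies in the stratum and completes the argument. (A further small caveat for the stratified version: a diagonal $A_1^*$ contributes no edges, so one should also check that the tuple can realize all edges of $G$ within $k>1$ matrices, which the adjacency-matrix choice does.)
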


\begin{proof}
If $\mathcal A \subsetneq M_n(\mathbb{C})$, then the $\sigma \times n^2$ matrix, where $\sigma = \frac{k^{n^{2} - 1} - k}{k - 1} $, obtained by forming all products of the $A_j$ matrices of length less than or equal to $n^2$ and flattening them has rank strictly less than $n^2$. This can be expressed with the vanishing of all $n^2\times n^2$ minors, so is a Zariski closed condition. Thus it suffices to find a tuple $A$ with the given graph $B(A)$ that generates the full matrix algebra.

By Laffey's Theorem \ref{laffey_gen_thm}, we can simply take an $n \times n$ diagonal matrix $A_{1}$ with $n$ distinct eigenvalues, and a single matrix $A_{2}$ which is the adjacency matrix of the strongly connected Burnside graph $B(A)$, and $\mathcal{A}(\left\lbrace A_{1},A_{2} \right\rbrace) =  M_n(\mathbb{C})$. 
\end{proof}

Likewise, condition (4) in Theorem \ref{q_gens_thm_ver_2} holds generically. We thus have:

\begin{cor}
Let $H$ and $K$ be real $qn \times qn$ symmetric matrices over $\mathbb{C}$ generic with respect to the following properties:
\begin{enumerate}
	\item There is a set $\left\lbrace v_{1},...,v_{k} \right\rbrace$ of $q$-words starting and ending at 1 such that the Burnside graph $B( \left\lbrace v_{1},..., v_{k} \right\rbrace )$ is strongly connected,
	\item $K$ satisfies Condition Mult$_{q}$,
	\item $H$ satisfies Condition $L-q$.
\end{enumerate}
Then the algebra generated by $H$ and $K$ is the full algebra $M_{qn}(\mathbb{C})$.
\end{cor}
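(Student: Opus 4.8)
The plan is to deduce the statement from Theorem~\ref{q_gens_thm_ver_2} by showing that its remaining hypothesis---condition~(4)---comes for free once (1)--(3) are assumed generically. Write $\mathcal{L}$ for the locus of pairs $(H,K)$, with $K$ diagonal and weakly ascending, satisfying (1)--(3). Exactly as in the proof of the preceding corollary, the condition $\mathcal{A}\subsetneq M_{qn}(\mathbb{C})$ is Zariski closed: forming the matrix whose rows are the coordinate vectors of all words in $H$ and $K$ of length at most $(qn)^{2}$, properness of $\mathcal{A}$ amounts to the vanishing of all its $(qn)^{2}\times(qn)^{2}$ minors. Hence the pairs of $\mathcal{L}$ with $\mathcal{A}=M_{qn}(\mathbb{C})$ form a Zariski open subset of $\mathcal{L}$ which, by Theorem~\ref{q_gens_thm_ver_2}, contains every pair of $\mathcal{L}$ that additionally satisfies (4). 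So it will be enough to prove that (4) holds on a dense subset of $\mathcal{L}$, i.e.\ on a nonempty open subset of each irreducible component.

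Next I would observe that condition~(4) is essentially open. For a fixed $q$-word $w$ based at $1$, the matrix $ww^{T}$ is symmetric, and ``$ww^{T}$ has $q$ distinct eigenvalues'' is the non-vanishing of the discriminant of its characteristic polynomial, a polynomial in the entries of $H$; so that locus is Zariski open. The further requirement in~(4) that $ww^{T}$ be \emph{diagonal} costs nothing: as in the proof of Theorem~\ref{q_gens_thm_ver_2}, one conjugates the pair by $P\otimes I_{n}$ with $P$ an orthogonal diagonaliser of $ww^{T}$, which fixes $K=I_{q}\otimes D$, preserves invertibility of all blocks (hence condition (3)), and does not change $\dim\mathcal{A}$. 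It will therefore suffice to produce, in each irreducible component of $\mathcal{L}$, a pair for which \emph{some} $q$-word $w$ based at $1$ has $ww^{T}$ with $q$ distinct eigenvalues.

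For this I would take $w$ to be the diagonal $q$-block $H_{11}$ itself, read as the length-one $q$-word $H^{(1)}_{11}$ based at $1$; since $H$ is real symmetric, $ww^{T}=H_{11}H_{11}^{T}=H_{11}^{2}$. Fix any $(H_{0},K_{0})\in\mathcal{L}$ and restrict to the affine family of pairs obtained by varying only the $(1,1)$-block of $H$, everything else staying equal to $(H_{0},K_{0})$. Condition $L$--$q$ involves $H_{11}$ only through its invertibility, and no block of its distinguished partition besides the first overlaps the $(1,1)$-block; so inside this (irreducible) family the sublocus satisfying (1)--(3) is open---strong connectedness of a Burnside graph survives perturbations, which can only create edges---and nonempty, since it contains $(H_{0},K_{0})$. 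The sublocus where $H_{11}^{2}$ has $q$ distinct eigenvalues is likewise open, and nonempty: for instance $H_{11}=\operatorname{diag}(1,2,\dots,q)$ gives $H_{11}^{2}=\operatorname{diag}(1,4,\dots,q^{2})$. Two nonempty open subsets of an irreducible variety intersect, so the family contains a pair in $\mathcal{L}$ satisfying~(4); as $(H_{0},K_{0})$ was arbitrary in $\mathcal{L}$, condition~(4) holds generically there, and Theorem~\ref{q_gens_thm_ver_2} yields $\mathcal{A}=M_{qn}(\mathbb{C})$.

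The part I expect to need the most care is fixing the exact meaning of ``generic with respect to (1)--(3)''---verifying that $\mathcal{L}$ is constructible and the failure locus $\mathcal{A}\subsetneq M_{qn}(\mathbb{C})$ is Zariski closed---and, relatedly, being honest about the diagonalising conjugation used to pass from ``$ww^{T}$ has $q$ distinct eigenvalues'' to the literal form of~(4): that conjugation replaces the words $v_{j}$ of condition~(1) by $P^{T}v_{j}P$, and one must argue that strong connectedness of their Burnside graph, or at least the hypothesis feeding Laffey's Theorem~\ref{laffey_gen_thm} in the proof of Theorem~\ref{q_gens_thm_ver_2}, is not destroyed. Everything else is the routine observation that two nonempty Zariski opens in an irreducible variety meet, the only genuine input being that a generic symmetric matrix squares to one with simple spectrum.
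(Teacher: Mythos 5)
Your proposal is correct and shares the paper's central idea -- condition (4) of Theorem \ref{q_gens_thm_ver_2} is a Zariski-open (discriminant) condition, so one only needs to exhibit witnesses -- but it executes this differently. The paper constructs a single explicit pair from scratch: $\tilde H_{11}$ is taken to be the adjacency matrix of the given Burnside graph, every other block of $\tilde H$ is $\operatorname{diag}(h_1,\dots,h_q)$ with distinct positive $h_i$ (so the witness word is $H_{1i}H_{i1}$, already diagonal with $q$ distinct eigenvalues), and $\tilde K=\operatorname{diag}(k_1I_q,\dots,k_nI_q)$; it then declares genericity from this one example, which tacitly assumes the locus cut out by (1)--(3) behaves as if irreducible. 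You instead start from an \emph{arbitrary} pair satisfying (1)--(3), perturb only the block $H_{11}$ inside the irreducible affine family so obtained, and use the length-one word $w=H_{11}$ (legitimate under Definition \ref{p-word}, exactly as the paper's own use of single-element words $H_{13}$, $H_{14}$ in Section \ref{algebra_generated}); the observation that conditions (1)--(3) and the distinct-eigenvalue condition are each open and nonempty on that irreducible family then gives density component by component. This is a genuinely more careful route: it buys you the conclusion without any irreducibility assumption on the (1)--(3) locus, at the cost of having to check that (1)--(3) are open along the perturbation, which you do. The one point you flag but do not resolve -- that conjugating by the block-diagonal $\operatorname{diag}(P,\dots,P)$ to make $ww^T$ literally diagonal replaces each $v_j$ by $P^*v_jP$ and could in principle destroy edges of $B(\{v_1,\dots,v_k\})$ -- is a real subtlety, but it is inherited from the proof of Theorem \ref{q_gens_thm_ver_2} itself and is equally unaddressed there, so it is not a gap specific to your argument.
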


\begin{proof}
The matrix $H_{1i} H_{i1}$ is $q\times q$ and the condition of having $q$ unique eigenvalues is Zariski open. It thus suffices to find an example where (1)-(3) hold and $H_{1i} H_{i1}$ has $q$ unique eigenvalues. We will construct a suitable $\tilde{H}$ and $\tilde{K}$. Take the Burnside graph $B( \left\lbrace v_{1},..., v_{k} \right\rbrace )$, and construct $\tilde{H}_{11}$ as the adjacency matrix of this graph. Place this $q \times q$ matrix in top left position of $\tilde{H}$. Then construct $\tilde{H}_{12} = \tilde{H}_{21} = \mbox{diag}(h_{1},...,h_{q})$, where the $h_{i}$ are distinct and positive. Place a copy of $\tilde{H}_{12}$ in every remaining position, to fill out $\tilde{H}$. Choose $n$ different eigenvalues $k_{i}$, and define $\tilde{K} = \mbox{diag}(k_{1}I_{q},...,k_{n}I_{q})$. Together $\tilde{H}$ and $\tilde{K}$ satisfy all the conditions of Theorem \ref{q_gens_thm_ver_2}, and so the theorem holds generically.
\end{proof}

\section{Kippenhahn's Conjecture}\label{kip_section}

Here we put Theorem \ref{2_gens_thm} to use, and construct a one-parameter family of counterexamples to Kippenhahn's conjecture.

\begin{remark}
The interest in Kippenhahn's conjecture can also be illustrated geometrically. Given a linear pencil $L=I+xH+yK$ as in Conjecture \ref{kip}, its determinant $f=\det L$ gives rise to the affine scheme $Spec \, \mathbb{C}[x,y]/(f)$. If condition (1) in Conjecture \ref{kip} holds, then then this scheme is obviously nonreduced - see for example Chapter 5, Section 3.4 of \cite{shaf}.
\end{remark}

\subsection{Existing counterexample}\label{existing}
 
Recall Laffey's counterexample \eqref{laffey_ex} from the introduction. It  satisfies the requirements of Theorem \ref{2_gens_thm}. In particular, $H$ satisfies Condition $L-2$, via the partition $(1, 1, 2)$ of 4. In the rest of this paper, we will use Theorem \ref{2_gens_thm} to present the construction of an entire family of counterexamples to the strong form of Kippenhahn's conjecture, of order 8 and above.

\subsection{Family of counterexamples for $8 \times 8 $ and above}\label{family}

Let us define: $$\alpha = \left( \begin{array}{cc}
1 & 0 \\
0  & -1
\end{array}  \right), \quad \beta = \left( \begin{array}{cc}
0 & b \\
b & 0
\end{array}  \right), \quad U = \left( \begin{array}{cc}
0 & -1 \\
1 & 0 
\end{array} \right), $$ where $b \in \mathbb{R}$ and $b \neq 0$. The key properties of these matrices are listed in the following lemma:

\begin{lemma}\label{props2}
These properties follow directly from the definitions of $\alpha$, $\beta$, and $U$:
\begin{enumerate}
\item $[\alpha, \beta] = \alpha\beta - \beta\alpha = \left( \begin{array}{cc}
0 & 2b \\
-2b & 0
\end{array} \right) \neq 0,$
\item $\alpha^{2} = -U^{2} = I_{2},$
\item $(\alpha + \beta)^{2} = (1 + b^{2})I_{2}, \mbox{ and },$
\item $\alpha U + U \alpha = \beta U + U \beta = 0.$
\end{enumerate}

\end{lemma}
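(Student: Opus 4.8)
The plan is simply to verify each of the four identities by direct $2\times 2$ matrix multiplication, since $\alpha$, $\beta$, and $U$ are given explicitly. First I would compute the products $\alpha\beta$ and $\beta\alpha$; subtracting gives the claimed commutator $\left(\begin{smallmatrix} 0 & 2b \\ -2b & 0 \end{smallmatrix}\right)$, and this is nonzero precisely because the hypothesis $b \neq 0$ is in force. That is the only place the restriction on $b$ is used, and it is worth flagging that this is exactly the non-commutativity input that condition (3) of Theorem \ref{2_gens_thm} will later demand of the building blocks.

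For (2), $\alpha$ is a diagonal sign matrix, so $\alpha^{2} = I_{2}$ is immediate, while $U$ is the standard rotation by $\pi/2$, so $U^{2} = -I_{2}$ and hence $-U^{2} = I_{2} = \alpha^{2}$. For (3) I would expand $(\alpha+\beta)^{2} = \alpha^{2} + (\alpha\beta+\beta\alpha) + \beta^{2}$: the cross term $\alpha\beta+\beta\alpha$ vanishes (this is visible from the computation already carried out for (1), since $\alpha\beta$ and $\beta\alpha$ turn out to be negatives of one another), while $\alpha^{2} = I_{2}$ by (2) and $\beta^{2} = b^{2}I_{2}$ by inspection, giving $(1+b^{2})I_{2}$; alternatively one can square the matrix $\left(\begin{smallmatrix} 1 & b \\ b & -1 \end{smallmatrix}\right)$ outright. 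For (4) I would compute the four products $\alpha U$, $U\alpha$, $\beta U$, $U\beta$ and observe that in each pair the two matrices are negatives of each other, so both anticommutators vanish.

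There is no genuine obstacle here: the lemma is a bookkeeping step recording the facts about the $2\times 2$ blocks that will be invoked repeatedly in the construction of the counterexample family, and every assertion reduces to a handful of entrywise multiplications. The only organisational choice is whether to present the computations in full or to derive (3) from (1) and (2) as above; I would do the latter, since it makes transparent which structural features (the anticommuting of $\alpha$ with $\beta$ up to sign, and with $U$) are really being used.
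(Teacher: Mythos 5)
Your verification is correct, and it is exactly what the paper intends: the lemma is stated as following "directly from the definitions," and your entrywise computations (together with the tidy derivation of (3) from the anticommutation $\alpha\beta=-\beta\alpha$ observed in (1)) check out in every case, including the role of $b\neq 0$ in (1). No gaps; this matches the paper's (omitted) direct-computation proof.
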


Define $2n \times 2n$ matrices $$A = \left( \begin{array}{ccccccc}
U & \alpha & \alpha & \alpha + \beta & \alpha & \hdots & \alpha\\
-\alpha & 2U & \alpha & \alpha \\
-\alpha & -\alpha & 3U & 0  \\
-\alpha - \beta & -\alpha & 0 & 4U   \\
-\alpha & \, & \, & \, & 5U \\
\vdots & \, & \, & \,  & \, & \ddots \\
-\alpha & \, & \, & \,  & \, & \, & nU\\
\end{array}\right),\qquad B = \left( \begin{array}{cccc}
U  \\
\, & U  \\
\, & \, & \ddots  \\
\, & \, & \, & U
\end{array} \right). $$ Both $A$ and $B$ are skew-symmetric. Recall our notation; we denote $2 \times 2$ blocks of a matrix by an $ij$ subscript, so for example $A_{14} = \alpha + \beta$.

Now, define symmetric matrices $$H = A^{2}, \quad K = AB+BA.  $$ Use $\lbrace \; \; , \; \rbrace $ to denote the anti-commutator, and consider the block form of $K$: $$\left( \begin{array}{ccccccc}
2U^{2} &  \lbrace \alpha , U \rbrace & \lbrace \alpha , U \rbrace & \lbrace \alpha + \beta , U \rbrace & \lbrace \alpha , U \rbrace & \hdots & \lbrace \alpha , U \rbrace\\
-\lbrace \alpha , U \rbrace & 4U^{2} & \lbrace \alpha , U \rbrace & \lbrace \alpha , U \rbrace \\
-\lbrace \alpha , U \rbrace & -\lbrace \alpha , U \rbrace & 6U^{2} & \lbrace \alpha , U \rbrace \\
-\lbrace \alpha + \beta , U \rbrace & -\lbrace \alpha , U \rbrace & -\lbrace \alpha , U \rbrace & 8U^{2} \\
-\lbrace \alpha , U \rbrace & \, & \, & \, & 10U^{2} \\
\vdots & \, & \, & \,  & \, & \ddots \\
-\lbrace \alpha , U \rbrace & \, & \, & \,  & \, & \, & 2nU^{2}\\
\end{array}\right) $$ By Lemma \ref{props2}, $U\alpha = - \alpha U$, $U\beta = - \beta U$, and $U^{2} = -I_{2}$.   Therefore every off-diagonal block of $K$ vanishes, and $$K = \left( \begin{array}{cccc}
-2I_{2}  \\
\, & -4I_{2}  \\
\, & \, & \ddots  \\
\, & \, & \, & -2nI_{2}
\end{array} \right),$$ and likewise $H$ is of the form $${\small \left( \arraycolsep=2pt
\begin{array}{ccccccccccc}
 -b^2-n & 0 & -2 & b-1 & 1 & -2 & 3 b+1 & -3 & \hdots & 0 & 1-n \\
 0 & -b^2-n & -b-1 & -2 & -2 & 1 & -3 & 1-3 b & \, & 1-n & 0 \\
 -2 & -b-1 & -7 & 0 & -1 & -1 & -1 & -b-2 & \, & -1 & 0 \\
 b-1 & -2 & 0 & -7 & -1 & -1 & b-2 & -1 & \, & 0 & -1 \\
 1 & -2 & -1 & -1 & -11 & 0 & -2 & -b & \, & -1 & 0 \\
 -2 & 1 & -1 & -1 & 0 & -11 & b & -2 & \, & 0 & -1 \\
 3 b+1 & -3 & -1 & b-2 & -2 & b & -b^2-18 & 0 & \, & -1 & b \\
 -3 & 1-3 b & -b-2 & -1 & -b & -2 & 0 & -b^2-18 & \hdots & -b & -1 \\
 \vdots & \, & \, & \, & \, & \, & \, & \vdots & \ddots & \, & \vdots \\
 0 & 1-n & -1 & 0 & -1 & 0 & -1 & -b & \, & -1 -n^{2} & 0 \\
 1-n & 0 & 0 & -1 & 0 & -1 & b & -1 & \hdots & 0 & -1 -n^{2} \\
\end{array}
\right) }. $$ Note the regular structure of the 2-blocks of $H$. Where $j \geq 5$, the $H_{1j}$ and $H_{j1}$ 2-blocks  are of the form $\left( \begin{array}{cc}
0 & 1-j \\
1-j & 0
\end{array} \right) $, $H_{jj}$ blocks are of the form \\ $\left( \begin{array}{cc}
-1-j^{2} & 0 \\
0 & -1-j^{2}
\end{array} \right), $ and $H_{4j}$ and $H_{j4}$ blocks are of the form $\left( \begin{array}{cc}
-1 & b \\
b & -1
\end{array} \right)$. All other $H_{ij}$ 2-blocks where $i,j \geq 5$ are of the form $\left( \begin{array}{cc}
-1 & 0 \\
0 & -1
\end{array} \right)$. We will now show that $H$ and $K$ violate Kippenhahn's conjecture. \begin{lemma}\label{eigenpairs}
All of the eigenvalues of $xH + yK$ have even multiplicity.
\end{lemma}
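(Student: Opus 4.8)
The plan is to exploit the specific algebraic structure built into $A$ and $B$, namely the relations in Lemma \ref{props2}, rather than to compute the characteristic polynomial of $xH+yK$ directly. The key observation is that $U$ anticommutes with both $\alpha$ and $\beta$ and satisfies $U^2=-I_2$, so $U$ behaves like a conjugation that sends each $2\times2$ block $M$ of $A$ to $UMU^{-1} = -M$ when $M$ is one of $\alpha$, $\beta$, $\alpha+\beta$, and fixes $U$ itself. More precisely, I would introduce the $2n\times 2n$ block-diagonal matrix $J = I_n \otimes U = B$, and first record that $J$ is orthogonal (indeed $J^{-1}=J^T=-J$) and that $J A J^{-1}$ equals the matrix obtained from $A$ by negating every off-diagonal block while keeping the diagonal $iU$ blocks fixed. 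Since $H=A^2$ and $K=AB+BA$, I would then compute how $J$ conjugates $H$ and $K$: the point is to find a single unitary (or orthogonal) $J$ under which $xH+yK$ is similar to a matrix with a manifest block structure forcing doubled eigenvalues, or alternatively to produce an anticommutation relation of the form $J(xH+yK)J^{-1} = \pm(xH+yK)$ or a relation tying $xH+yK$ to its own transpose/conjugate in a way that pairs eigenvalues.

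Concretely, the cleanest route is to find a fixed-point-free involutive (up to sign) symmetry $S$ with $S(xH+yK)S^{-1} = xH+yK$ but $S^2 = -I$; then $S$ makes the underlying space a module over $\mathbb{C}[S]/(S^2+1)\cong \mathbb{C}\times\mathbb{C}$, and since $S$ commutes with $xH+yK$, each eigenspace of $xH+yK$ is $S$-invariant and splits into the two $S$-eigenspaces for $\pm i$; if one further shows a second symmetry interchanging these two pieces and commuting with $xH+yK$, the eigenspace dimensions must be equal, hence even. I expect $S = I_n \otimes U$ to play this role for $K$ automatically (since $K$ is block-diagonal with scalar blocks, it commutes with everything of the form $I_n\otimes(\cdot)$), so the real content is checking $S H S^{-1} = H$, i.e. $U A^2 U^{-1}$ has the same block entries as $A^2$. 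Using $U(\text{off-diag block})U^{-1} = -(\text{block})$ and $U(iU)U^{-1} = iU$, conjugating $A$ negates off-diagonal blocks; then $A^2$ is built from products of two blocks, and a product of two off-diagonal blocks picks up $(-1)^2=1$, a product of a diagonal and an off-diagonal block picks up $(-1)$ — so I would need to verify that the surviving contributions to each block of $A^2$ are exactly those with an even number of off-diagonal factors, which is plausible because the diagonal blocks $iU$ anticommute with everything else and such mixed terms may cancel in pairs. This verification is the crux.

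The main obstacle, then, is pinning down the correct symmetry and checking that $H$ really is invariant (or transforms predictably) under it; the anticommutation bookkeeping from Lemma \ref{props2}, while elementary, has to be done carefully block by block, and there is a genuine risk that no single $I_n\otimes(\cdot)$ symmetry works and one needs a slightly larger group — for instance combining $I_n\otimes U$ with a permutation of the block-index set $\{1,\dots,n\}$ that respects the pattern of $\alpha$ versus $\alpha+\beta$ entries in the first row and column of $A$. In that case I would argue that the relevant automorphism group acts with all orbits of even size on any joint eigenspace of $\{H,K\}$. Once the symmetry is established, the conclusion is immediate: the centralizer argument shows every common eigenspace of $H$ and $K$, and in particular every eigenspace of any real linear combination $xH+yK$, decomposes into paired pieces of equal dimension, so all multiplicities are even.

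Finally I would remark that this is exactly the mechanism underlying the Kippenhahn conjecture's hypothesis — even multiplicities for all $(x,y)$ — while Theorem \ref{2_gens_thm} (to be applied afterwards, via the $L-2$ partition $(1,1,2,\dots)$ visible in the block structure of $H$, together with the noncommutativity furnished by $[\alpha,\beta]\neq 0$ from Lemma \ref{props2}) shows the algebra generated by $H$ and $K$ is nonetheless all of $M_{2n}(\mathbb{C})$, contradicting the conjecture's conclusion; so Lemma \ref{eigenpairs} is the half of the counterexample that verifies the hypothesis.
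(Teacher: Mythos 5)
There is a genuine gap, and it is not just the bookkeeping you flagged as ``the crux'' --- the entire mechanism you propose is unavailable for this example. First, the concrete check fails: conjugating $A$ by $S=I_n\otimes U=B$ negates every off-diagonal block and fixes the diagonal blocks $jU$, so the $(i,j)$ block of $SA^2S^{-1}$ with $i\neq j$ differs from that of $A^2=H$ by $-2\bigl(A_{ii}A_{ij}+A_{ij}A_{jj}\bigr)$. Using $U\alpha=-\alpha U$ and $U\beta=-\beta U$ this cross term equals $(j-i)A_{ij}U$, which is nonzero whenever $A_{ij}\neq 0$; the mixed terms do not cancel, so $S$ does not commute with $H$. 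Second, and decisively, no fix of the kind you suggest (composing with block permutations, enlarging the symmetry group) can work: any linear $S$ commuting with both $H$ and $K$ lies in the commutant of the algebra $\mathcal{A}$ they generate, and Section \ref{algebra_generated} shows $\mathcal{A}=M_{2n}(\mathbb{C})$, so that commutant consists of scalars only. A real scalar cannot satisfy $S^2=-I$, and $\pm iI$ does not split eigenspaces. Indeed, if a nonscalar commuting symmetry existed it would produce a common reducing subspace and $H,K$ would fail to be a counterexample to Kippenhahn in the first place; the whole point of the construction is that the eigenvalue pairing does \emph{not} arise from an element of the commutant.

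The mechanism the paper actually uses is different and is built into the definitions $H=A^2$, $K=AB+BA$, $B^2=-I_{2n}$: for fixed real $(x_0,y_0)$ one has $(Ax_0+By_0)^2=Hx_0^2+Kx_0y_0-I_{2n}y_0^2$. The matrix $Ax_0+By_0$ is real skew-symmetric, so its eigenvalues come in conjugate pairs $\pm i\lambda_k$, and both members of a pair square to $-\lambda_k^2$; hence $(Ax_0+By_0)^2$, and therefore $Hx_0+Ky_0$ after adding the scalar $y_0^2$ and dividing by $x_0\neq 0$, has all eigenvalues of even multiplicity (the case $x_0=0$ is immediate from the form of $K$). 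The ``pairing symmetry'' here is complex conjugation acting on eigenvectors of $Ax_0+By_0$ --- an antilinear map that moreover depends on $(x_0,y_0)$ --- which is exactly why it pairs eigenvalues without ever producing a linear element of the commutant. Your final paragraph correctly identifies the role of the lemma in the counterexample, but the proof itself needs to be rebuilt around the skew-symmetric square rather than a commuting involution.
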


\begin{proof}
Consider first $Ax + By$. Take some $x_0$ and $y_0$ in $\mathbb{R}$, where $x_0 \neq 0$. The eigenvalues of of the skew-symmetric matrix $Ax_0 + By_0$ will be purely imaginary and will exist in conjugate pairs. Note that a given pair may appear more than once. Denote such pairs by $\pm i \lambda_{k}$, where $\lambda_{k}$ is real and $k$ ranges from 1 to $n$.

Then the eigenvalues of $(Ax_0 + By_0)^{2}$ will be $-\lambda^{2}_{k}$, obviously coming in pairs. Since the same pair of eigenvalues of $Ax_0 + By_0$ may occur several times, we cannot say for sure that each $-\lambda^{2}_{k}$ has multiplicity 2, but we can be sure that it has even multiplicity. Let  $v_{k}$ be an eigenvector of $Ax_0 + By_0$ with eigenvalue $i \lambda_{k}$, and $w_{k}$ be an eigenvector of $Ax_0 + By_0$ with eigenvalue $-i \lambda_{k}$. Since $v_{k}$ and $w_{k}$ belong to different eigenspaces of $Ax_0 + By_0$, the subspace $\mbox{span} \lbrace v, w \rbrace$ which they generate is two-dimensional.

 Then $(Ax_0 + By_0)^{2} v_{k} = -\lambda^{2}_{k} v_{k} $ and $(Ax_0 + By_0)^{2} w_{k} = -\lambda^{2}_{k} w_{k} $. But $A^{2} = H$, $AB + BA = K$, and $B^{2} = -I_{2n}$, so we have $$ (Ax_0 + By_{0} )^{2} = Hx_{0}^{2} + Kx_{0}y_{0} -  I_{2n}y_{0}^{2}, $$ so $$(Hx_{0}^{2} + Kx_{0}y_{0}) v_{k} =(y_{0} -\lambda^{2}_{k}) v_{k}. $$ Dividing through by $x_{0} \neq 0$ we have that $$(Hx_{0} + Ky_{0}) v_{k} =\frac{1}{x_{0}}(y_{0} -\lambda^{2}_{k}) v_{k}. $$  Therefore, $v_{k}$ is an eigenvector of $Hx_{0} + Ky_{0}$  with eigenvector $\frac{1}{x_{0}}(y_{0} -\lambda^{2}_{k})$. Repeating this process for $w_{k}$, we see that 
$w_{k}$ is also an eigenvector of $Hx_{0} + Ky_{0}$  with eigenvector $\frac{1}{x_{0}}(y_{0} -\lambda^{2}_{k})$. Therefore $v_k$ and $w_k$ span a two-dimensional eigenspace of $Hx_{0} + Ky_{0}$.

For the case where $x_{0} = 0$, we simply have $K y_{0}$, which clearly has paired eigenvalues because of the diagonal structure of $K$. 
Therefore, for every $x_{0}$ and $y_{0}$ in $\mathbb{R}$ $Hx_0 + Ky_0$ has eigenvalues all of even multiplicity. 
\end{proof}

Denote by $\mathcal{A}$ the algebra generated by $H$ and $K$. The Kippenhahn Conjecture claims that $\mathcal{A}$ cannot be the full algebra $M_{2n}(\mathbb{C})$. We will show that it is in fact the full algebra. 

\subsection{The algebra generated by $H$ and $K$}\label{algebra_generated}

We will show that $H$ and $K$ satisfy the requirements of Theorem \ref{2_gens_thm}. $H$ is clearly symmetric, and $p = 2$. We will first evaluate the characterstics of the 2-blocks of $H$, and draw the associated Burnside graph. Let us evaluate $H_{12}$: $$H_{12} = U \alpha + 2 \alpha U - 2 \alpha^{2} - \beta \alpha. $$ Applying the definitions of $\alpha, \beta \mbox{ and } U$, we see that $$H_{12} =  \alpha U -\beta \alpha - 2 I_{2}. $$ Evaluating this gives us $$H_{12} = \left( \begin{array}{cc}
-2 & -1+b \\
-1-b & -2 
\end{array} \right),$$  which has determinant $3 + b^{2} \neq 0$. In the Burnside graph $B(H, K)$ there must be a connection between nodes 1 and 3, nodes 2 and 4, and either nodes 1 and 4 or nodes 2 and 3, or both. Recall that because $H$ is symmetric, all connections are bi-directional: \begin{center}
\includegraphics[scale=.30]{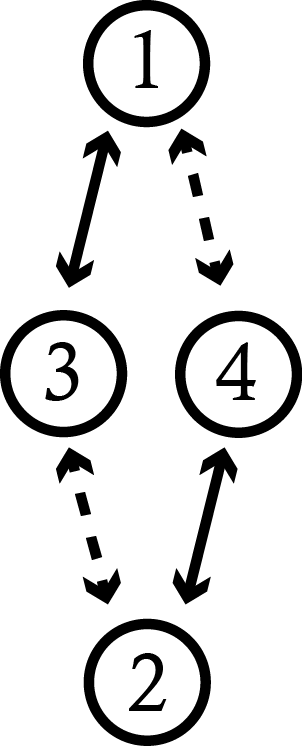}
\end{center} Here the dashed edges indicate that at least one of them has to exist, potentially both.

Now evaluate $H_{13}$: $$H_{13} = U \alpha + \alpha^{2} + 3  \alpha U, $$ which becomes $$H_{13} = 2 \alpha U + I_{2} = \left( \begin{array}{cc}
1 & -2 \\
-2 & 1 
\end{array} \right) = H_{31}, $$ determinant equal to -3. Add the extra connections to the Burnside graph: \begin{center}
\includegraphics[scale=.30]{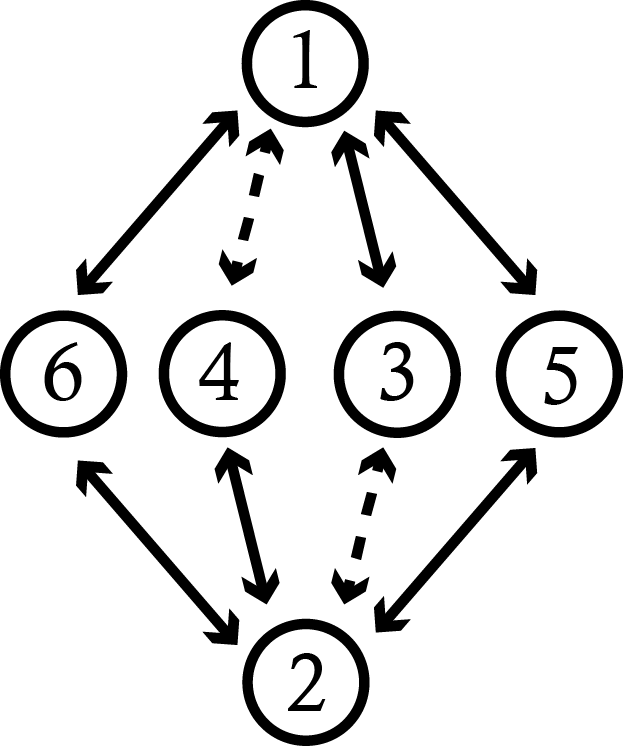}
\end{center} Likewise $$ H_{14} = 3 (\alpha +\beta) U + I_{2} = \left( \begin{array}{cc}
1+3b & -3 \\
-3 & 1-3b 
\end{array} \right) = H_{41},  $$ with determinant $-8 - 9b^{2} \neq 0 $ and for $j = 5,...,n$, $$H_{1j} = (j-1) \alpha U = \left( \begin{array}{cc}
0 & 1-j \\
1-j & 0 
\end{array} \right) = H_{j1}, $$ all of which have non-zero determinant. Add these new nodes and edges, and we see that the Burnside graph is strongly connected:  \begin{center}
\includegraphics[scale=.30]{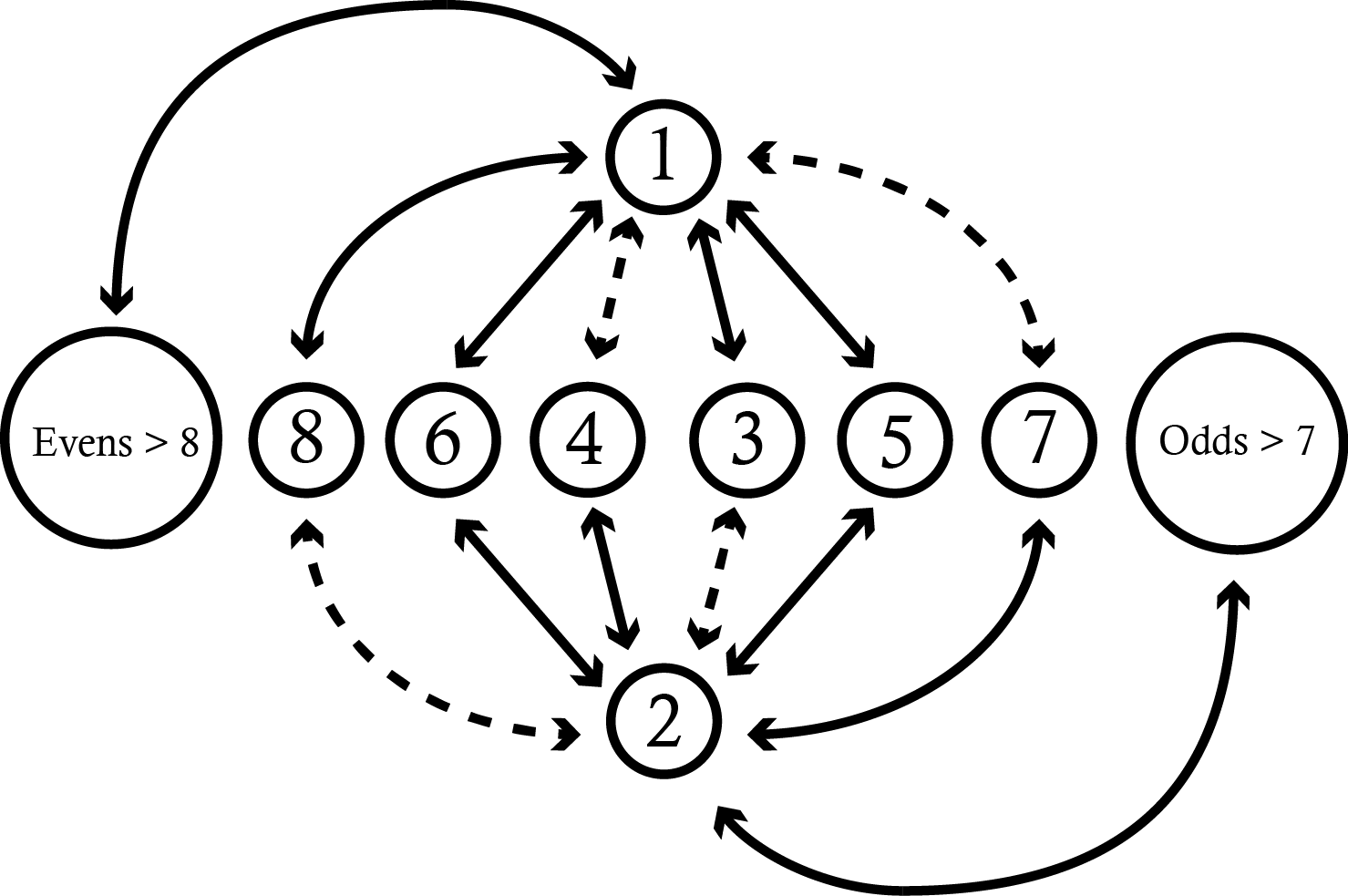}
\end{center} Now we will check off the requirements of Theorem \ref{2_gens_thm} one by one. \begin{enumerate}
\item $K$ clearly satisfies condition Mult$_{2}$.
\item We  have already established that every $H_{1j}$ is invertible. $H$ therefore satisfies Condition $L-2$ via the partition $(1, 1, ... , 1) $ of $n$.
\item 
Consider the  single element 2-words $H_{13}$ and $H_{14}$. Then,
$$H_{13}H_{31} = \left( \begin{array}{cc}
5 & -4 \\
-4 & 5 
\end{array} \right), $$ and likewise $$H_{14}H_{41} = \left( \begin{array}{cc}
9 - (1+3b)^{2} & -6 \\
-6 & 9 + (1+3b)^{2} 
\end{array} \right) $$  Now directly evaluate their commutator. After simplifying, we have: $$[H_{13}H_{31}, H_{14}H_{41}] = \left( \begin{array}{cc}
0 & 48b \\
-48b & 0 
\end{array} \right) \neq 0.  $$ Therefore, $H_{13}$ and $H_{14}$ satisfy the third requirement of Theorem \ref{2_gens_thm}.
\end{enumerate} The pair of matrices $H$ and $K$ therefore satsify the requirements of Theorem \ref{2_gens_thm}, and so $\mathcal{A} = M_{2n}(\mathbb{C})$. Thus $H$ and $K$ as defined are a one-parameter family of counterexamples to Kippenhahn's Conjecture, for order $8 \times 8$ and greater.

\end{document}